\newtheorem{theorem}{Theorem}
\newtheorem{proposition}[theorem]{Proposition}
\newtheorem{corollary}[theorem]{Corollary}
\newtheorem{lemma}[theorem]{Lemma}
\theoremstyle{definition}
\newtheorem{example}[theorem]{Example}
\newtheorem*{remark}{Remark}
\title{A Functional Decomposition of Finite Bandwidth Reproducing Kernel Hilbert Spaces}
\author{
Gregory T. Adams\\
Department of Mathematics\\
Bucknell University\\
{\tt adams@bucknell.edu}\vspace{3mm}\\
Nathan A. Wagner\\
Department of Mathematics and Statistics\\
Washington University in St Louis\\
{\tt nathanawagner@wustl.edu}
}
\begin{document}

\maketitle

\begin{abstract}
In this work, we consider ``finite bandwidth" reproducing kernel Hilbert spaces which have orthonormal bases of the form 
$f_n(z)=z^n \prod_{j=1}^J \left( 1 - a_{n}w_j z \right)$, where $w_1 ,w_2, \ldots w_J $ are distinct points on the circle $\mathbb{T}$ and  
$\{ a_n \}$ is a sequence of complex numbers with limit $1$.   We provide general conditions based on a matrix recursion that guarantee such spaces contain a functional multiple of the Hardy space. Then we apply this general method to obtain strong results for finite bandwidth spaces when 
$\lim_{n\rightarrow \infty} n (1-a_n)=p$. In particular, we show that point evaluation can be extended boundedly to precisely $J$ additional points on $\mathbb{T}$ and we obtain an explicit functional decomposition of these spaces for  $p>1/2$ in analogy with a previous result in the tridiagonal case due to Adams and McGuire.  We also prove that multiplication by $z$ is a bounded operator on these spaces and that they contain the polynomials. 
\end{abstract}

\vspace{10pt}

\section{The Problem}\label{theproblem}

If $K(z,w)$ is a function defined on an open disc about the origin which is analytic in $z$ and coanalytic in $w$, then $K$ has a power series representation 
$K(z,w)=\sum_{j=0}^{\infty}\sum_{k=0}^{\infty}a_{j,k}z^j\overline{w}^k.$ In the case that $A=(a_{j,k})$ is a bounded matrix, it is an easy exercise to check that $A$ is positive semi-definite on $\ell^2$
if and only if the function $K$ is, and in this case by the Moore-Aronszajn Theorem the function $K$ is the kernel for a reproducing kernel Hilbert space $H(K)$ (see \cite{nA50}). In this case, the space $H(k)$ consists of analytic functions on a domain containing  a disk about the origin in $\mathbb{C}$. Recall the well-known fact that if $\{f_n\}$ is an orthonormal basis for the reproducing kernel Hilbert space ( RKHS) of functions $H(K)$ associated with $K$, then $K(z,w)= \sum_{n=0}^{\infty}f_n(z) \overline{f_n(w)}$ \cite{Paulsen}. 
Conversely, if $A$ can be factored as  $A = LL^*$ where $L$ has no kernel, then the columns of $L$ give the Taylor coefficients of an orthonormal basis for $H(K)$ \cite{Adams-McGuire-Paulsen}.  In fact, $H(K)$ can be identified with the range space of $L$ in a very natural way \cite{Adams-McGuire-Paulsen}.  This range space identification will lie at the heart of most of our computations.
\par
The Cholesky algorithm always allows for a factorization of a positive definite matrix $A = LL^*$  with $L$ lower triangular.
If $A$ has finite bandwidth $2J+1$, then $L$ is lower triangular with $J+1$ non-trivial diagonals and we speak of a ``bandwidth-$2J+1$" kernel  $K$.  In particular, we say an analytic kernel $K$  is of finite bandwidth $2J+1$ if there exists an orthonormal basis of polynomials for $H(K)$ of the form $$\{f_n(z)=(b_{0,n}+b_{1,n}z+..+b_{J,n}z^J)z^n\}.$$ The simplest case where the space $H(K)$ has bandwidth $1$ was extensively studied by Shields in \cite{aS74} in the context of multiplication operators. Such spaces are referred to as \textit{diagonal spaces} and have orthonormal bases consisting of monomials.

\par
In the context of bandwidth-$2J+1$ analytic kernels, the  \textit{natural domain} of $H(K)$ is given by $\text{Dom}(K)=\{z \in \mathbb{C}: \sum_{n=0}^{\infty}|f_n(z)|^2<\infty\}.$  Adams and McGuire established that the natural domain for $H(K)$ is a disk about the origin with up to $J$ additional points \cite{Adams-McGuire2}.  They explored the $J=1$ case and gave an interesting family of kernels $K$ where $H(K)$ is a nontrivial extension of a diagonal space \cite{Adams-McGuire1}. In this paper, we show how to generalize their results to higher bandwidths.
\par
Now we can state the problem of interest. Throughout this work,  $z_1 ,z_2, \ldots,  z_J $ will be distinct points on the unit circle $\mathbb{T}$ and  $w_1 ,w_2, \ldots,  w_J $ will be the corresponding conjugates. The sequence of complex numbers  $a_0, a_1, \ldots $ will be a  sequence converging to $1$ so that $1-a_j$ is nonvanishing.
Define  
$$\phi(z)= \prod_{j=1}^J \left( 1 - w_j z \right)=\sum_{k=0}^J\beta_k z^k,$$
and 
$f_n(z) = z^n\phi(a_n z)$. We will follow the notational convention that $\beta_j=0$ if $j<0$ or $j>J$.
Then
$$K(z,w)=\sum_{n=0}^{\infty}f_n(z) \overline{f_n(w)}$$   is a bandwidth-$2J+1$ kernel for a RKHS $H(K)$ with orthonormal basis
$\{f_0,f_1, \ldots \}.$
\par
Theorems \ref{containment} and \ref{decomposition} show that in the case where
\newline
$\lim_{n \rightarrow \infty}n(1-a_n)=p$ and $p>1/2$,  $H(K)$ has natural domain 
$\mathcal{D}=\mathbb{D} \cup \{z_1,z_2, \ldots z_J \}$ and decomposes as
$$H(K)= \phi(z) H^2(\mathbb{D})+\mathbb{C} K(z,{z}_1)+\mathbb{C} K(z,{z}_2) + \cdots  +\mathbb{C} K(z,{z}_J).$$
Moreover, in this case, multiplication by $z$ is a bounded operator and the polynomials are contained in $H(K)$.
 \par
These results generalize those in \cite{Adams-McGuire1} and \cite{S} to higher bandwidth and more general weight sequences. This leads to a very nice functional characterization of certain finite bandwidth spaces. The primary innovation in this work is the use of matrix recursion to bound the norm of infinite dimensional matrices, a program which was started in \cite{S}.
Key also is the role played by the combinatorial Theorems \ref{Louck} and \ref{homoSum}.

\section{Preliminaries}
\par
The first result shows that the restrictions of the functions in $H(K)$ to the disc $\mathbb{D}$ are in the Hardy space.
\begin{proposition} 
$H(K) \subset H^2(\mathbb{D})$. 
\end{proposition}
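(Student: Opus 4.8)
The plan is to realize this inclusion as the boundedness of a single lower-triangular operator on $\ell^2$. Since $\{f_n\}$ is an orthonormal basis for $H(K)$, every $f\in H(K)$ has the form $f=\sum_{n=0}^{\infty}c_n f_n$ with $c=(c_n)\in\ell^2$ and $\|f\|_{H(K)}=\|c\|_{\ell^2}$. Expanding $f_n(z)=z^n\phi(a_n z)=\sum_{k=0}^J\beta_k a_n^k z^{n+k}$, the coefficient of $z^m$ in $f_n$ is $\beta_{m-n}a_n^{m-n}$. I would therefore introduce the lower-triangular matrix $L$ with entries $L_{m,n}=\beta_{m-n}a_n^{m-n}$ (using the convention that $\beta_j=0$ for $j<0$ or $j>J$); this is precisely the factor $L$ arising from the factorization $A=LL^*$ discussed in Section~\ref{theproblem}, whose columns record the Taylor coefficients of the orthonormal basis. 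The Taylor coefficient sequence of $f$ is then $Lc$.

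First I would verify that the Taylor-coefficient map really is $c\mapsto Lc$. Because a disk about the origin lies in the natural domain, the $m$-th coefficient functional $f\mapsto f^{(m)}(0)/m!$ is a bounded linear functional on $H(K)$; applying it to the norm-convergent series $f=\sum_n c_n f_n$ and using that each $f_n$ is a polynomial with $z^m$-coefficient $L_{m,n}$ shows that the $m$-th Taylor coefficient of $f$ equals $\sum_n c_n L_{m,n}=(Lc)_m$. Note that for each fixed $m$ this is a finite sum, so $(Lc)_m$ is well defined for any $c$.

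The heart of the matter is to show $L$ is bounded on $\ell^2$ and to estimate its norm, since then
$\|f\|_{H^2(\mathbb{D})}^2=\|Lc\|_{\ell^2}^2\le\|L\|^2\|c\|_{\ell^2}^2=\|L\|^2\|f\|_{H(K)}^2$,
which yields both $f\in H^2(\mathbb{D})$ and the continuity of the inclusion. To bound $L$ I would exploit the finite band structure: writing $S$ for the forward shift $Se_n=e_{n+1}$ and $D_k$ for the diagonal operator $D_k e_n=a_n^k e_n$, one has $L=\sum_{k=0}^J\beta_k S^k D_k$. Since $a_n\to 1$ the sequence $\{a_n\}$ is bounded, so each $D_k$ is bounded with $\|D_k\|=\sup_n|a_n|^k$, while $\|S^k\|=1$. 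Hence $\|L\|\le\sum_{k=0}^J|\beta_k|\sup_n|a_n|^k$, a finite sum of finite terms.

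The computation itself is routine; the only points requiring care are the identification of the Taylor-coefficient map with the operator $L$ (interchanging the coefficient functional with the infinite sum) and the observation that finiteness of $J$ together with $a_n\to 1$ is exactly what forces the band operator to be bounded. No genuinely hard estimate arises here: this is the easy direction, in contrast with the reverse-type containment of Theorem~\ref{containment}, where the matrix recursion is needed.
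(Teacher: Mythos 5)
Your proposal is correct and is essentially the paper's argument: both identify the Taylor-coefficient map of $f=\sum_n c_n f_n$ with the lower-triangular band matrix $L$ and bound it using only the finiteness of $J$, the coefficients $\beta_k$, and the boundedness of $\{a_n\}$. The paper does this by applying Cauchy--Schwarz to each convolved coefficient $\widehat{\alpha}_n=\sum_{k=0}^J\alpha_{n-k}\beta_k a_{n-k}^k$ and summing, while you package the same estimate as the triangle inequality for $L=\sum_{k=0}^J\beta_k S^kD_k$; this is a cosmetic, not substantive, difference.
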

\begin{proof} If $f \in H(K)$, then there exists an $\ell^{2}$ sequence $\{\alpha_n \}$ such that $f=\sum_{n=0}^{\infty} \alpha_n f_n$. Thus:
\begin{small}
\begin{eqnarray*}
 f (z)& = & \sum_{n=0}^\infty \alpha_n f_n(z)\\
& = & \sum_{n=0}^\infty \alpha_n \left( \sum_{k=0}^J  \beta_k a_n^k z^{n+k}\right)\\
&= &  \sum_{n=0}^\infty \left( \sum_{k=0}^J \alpha_{n-k} \beta_k a_{n-k}^k \right) z^n \\
&= &  \sum_{n=0}^\infty  \widehat{\alpha}_n z^n .
\end{eqnarray*}
\end{small}

By the Cauchy-Schwarz inequality,  $| \widehat{\alpha}_n |^2 \le c^2 \sum_{k=0}^J | \alpha_{n-k} |^2$, where $c$ is a constant that depends only on the zeros $z_1 ,z_2, \ldots,  z_J $ and the sequence $\{a_n\}$. Thus,  $\sum_{n=0}^\infty | \widehat{\alpha}_n |^2 \le (J+1) c^2 \sum_{n=0}^\infty | \alpha_{n} |^2$ and $f$ is in $H^2(\mathbb{D})$.
\end{proof}

\par
Given the basis $f_n(z)=\phi(a_n z)z^n$ and the fact that $a_n \rightarrow 1$ it is reasonable to ask when functions of the form $\phi(z)f(z)$ for $f \in H^2(\mathbb{D})$ are in $H(K)$.  The rate of convergence of $a_n$ to $1$ is crucial in assessing when this is the case.  Douglas' Range Inclusion Lemma (see \cite{Douglas}) will provide the major tool to answer this question.
\par
To this end, let $L$ be the matrix whose $n$th column consists of the Taylor coefficients of $f_n(z)$ and let $\widehat{L}$  be the matrix whose $n$th column consists of the Taylor 
coefficients of $z^n\phi(z)$.  By Douglas' Lemma, $\phi(z)H^2(\mathbb{D}) \subset H(K)$ if and only if there is a bounded matrix $C=\left(c_{j,k}\right)_{j,k \ge 0}$ such 
that $\widehat{L}=LC$.  Solving this equation for $C$ is complicated and will involve a recursion.  First note that $L$ and $\widehat{L}$ are both lower triangular which implies that $C$ is as well.  So one must solve
$$\begin{pmatrix} 
\beta_{0} & 0 & 0 &  \cdots \\
\beta_{1} & \beta_{0}  & 0  &  \cdots \\
\beta_{2} &\beta_{1} & \beta_{0}  &  \ddots \\
\vdots &\vdots &\vdots &\ddots&  \ddots\\
\beta_{J}  & \beta_{J-1}  &\beta_{J-2}   &  \ddots\\
0  & \beta_{J}  &\beta_{J-1}  &  \ddots\\
0  & 0 &\beta_{J}  &  \ddots\\
\vdots & \vdots & \ddots &   \ddots
 \end{pmatrix} =
\begin{pmatrix} 
\beta_0 & 0 & 0 &  \cdots \\
\beta_1a_0 & \beta_0  & 0  &  \cdots \\
\beta_2a_0^2 &\beta_1a_1 & \beta_0  &  \ddots \\
\vdots &\vdots &\vdots &\ddots&  \ddots\\
\beta_Ja_0^J  & \beta_{J-1}a_1^{J-1}  &\beta_{J-2}a_2^{J-2}   &  \ddots\\
0  & \beta_Ja_1^J  &\beta_{J-1}a_2^{J-1}  &  \ddots\\
0  & 0 &\beta_Ja_2^J  &  \ddots\\
\vdots & \vdots & \ddots &   \ddots
 \end{pmatrix}   \begin{pmatrix} 
c_{0,0} & 0 & 0 &\cdots \\
c_{1,0} & c_{1,1} & 0& \ddots\\
c_{2,0} & c_{2,1} & c_{2,2} & \ddots\\
c_{3,0} & c_{3,1} & c_{3,2} & \ddots\\
c_{4,0} & c_{4,1} & c_{4,2} & \ddots\\
c_{5,0} & c_{5,1} & c_{5,2} &  \ddots\\
\vdots & \vdots & \vdots & \vdots\\
\end{pmatrix}. $$
 for $C$.

Considering the $n$th column of matrix $C$ and using the fact that $\beta_0=1$ for all $n$, leads to the recursion:
\begin{eqnarray*}
c_{n,n}&=&1\quad \text{for all}\quad n \quad \\
c_{n+k,n}&=& \beta_k - \sum_{i=1}^k\beta_ia_{n+k-i}^ic_{n+k-i,n}\quad \text{if}\quad 1\le k\le J \quad *\\
c_{n+k,n}&= &- \sum_{i=1}^J\beta_ia_{n+k-i}^ic_{n+k-i,n}\quad \text{if}\quad k>J \quad **\\
\end{eqnarray*}
\par
This recursion is profitably viewed as a vector recursion.  For $n \geq 0$ and $j \geq n+J$, let  
$\vec{v}_{j,n}=\left(c_{j-J+1,n}, c_{j-J+2,n}, \ldots , c_{j,n} \right)^T.$ The $J$ by $J$ matrix 
$$M_n= \begin{pmatrix} 
0                        & 1                      & 0       & \cdots & 0        &0\\
0                        & 0                      & 1       & \cdots & 0        &0\\
\vdots                &\vdots                &\ddots & \dots &\vdots &\vdots\\
0                        & 0                      & 0       & \cdots & 0        &1\\
-\beta_Ja_{n-J+1}^J &-\beta_{J-1}a_{n-J+2} ^{J-1} &     -\beta_{J-2}a_{n-J+3}^{J-2}    &\cdots       & -\beta_2a_{n-1}^2  &  -\beta_1a_{n} \\
\end{pmatrix}$$
encodes the map which takes 
$\left(c_1,c_2, \ldots , c_J\right)^T$ to 
$\left(c_2,c_3, \ldots , c_J,  -  \sum_{i=1}^J\beta_ia_{n-i+1}^ic_{J+1-i}  \right)^T.$  
This allows equation ** to be 
expressed by the recursion: $\vec{v}_{n+k,n} = M_{n+k} \vec{v}_{n+k-1,n} $ for $k >J$.   
Tracing the recursion backwards, one obtains   
$$\vec{v}_{n+k,n} = M_{n+k}  M_{n+k-1} \cdots M_{n+J+1} \, \vec{v}_{n+J,n} \quad \text{for}\quad k >J.$$
\par
The recursion matrix $M_n$ and its pointwise limit 
$$M_\infty= \begin{pmatrix} 0 & 1 & 0 & \dots & 0 & 0\\
0 & 0 & 1 & \dots & 0 & 0\\
\vdots & \vdots & \vdots & \ddots & 1 & 0\\
0 & 0 & 0  & \dots & 0 & 1\\
- \beta_J & - \beta_{J-1} & -\beta_{J-2} & \dots & -\beta_2 & -\beta_1 \end{pmatrix}$$   
will play dominant roles in what follows.  
Note that  $\vec{\nu}_j = \left(z_j^{J-1},z_j^{J-2}, \ldots ,z_j,1 \right)^T$ is an eigenvector for $M_\infty$ with eigenvalue $w_j$ 
for $j=1, \ldots, J$. It is well-known that $\{\vec{\nu}_j:j=1,2,\dots J\}$ forms a basis for $\mathbb{C}^J$, and it turns out that in the proceeding section it will be useful to describe the action of $M_n$ in terms of a basis of these eigenvectors.
\par 
To determine when $C$ is bounded, we will estimate the norms of such matrix products for large $k$. The following result due to Adams and McGuire in \cite{Adams-McGuire1} will then provide the desired condition:
\begin{theorem}[Adams-McGuire] \label{matrix theorem}
If $p>0$, then the matrix 
$$M=\renewcommand\arraystretch{1.5}\begin{pmatrix}
 0 & 0 & 0 & 0 & \dots\\
\frac{p}{2} & 0 & 0 & 0 & \dots\\
\frac{p}{2}(\frac{2}{3})^p & \frac{p}{3} & 0 & 0 & \dots\\
\frac{p}{2}(\frac{2}{4})^p & \frac{p}{3}(\frac{3}{4})^p & \frac{p}{3} & 0 & \dots\\
\frac{p}{2}(\frac{2}{5})^p & \frac{p}{3}(\frac{3}{5})^p & \frac{p}{4}(\frac{4}{5})^p & \frac{p}{5} &\dots\\
\vdots & \vdots & \vdots & \vdots & \ddots
\end{pmatrix}
$$
 is bounded  if and only if $p>\frac{1}{2}$.
\end{theorem}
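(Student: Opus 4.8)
The plan is to first rewrite the entries of $M$ in a scale-invariant form and then treat the two implications separately, with the threshold $p=1/2$ emerging from a single exponent in each direction. Indexing rows and columns by $i,j\ge 1$, the nonzero entries are $M_{i,j}=\frac{p}{j+1}\left(\frac{j+1}{i}\right)^p=p\,(j+1)^{p-1}\,i^{-p}$ for $i>j$, so that $M$ is the discrete analogue of the integral operator on $L^2(0,\infty)$ with kernel $k(x,y)=p\,y^{p-1}x^{-p}\mathbf{1}_{x>y}$, which is homogeneous of degree $-1$. This observation dictates the correct Schur weight, namely $x^{-1/2}$, and it is what I would exploit in the sufficiency direction.

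For the necessity of $p>1/2$ I would simply examine the first column. Its entries are $M_{i,1}=p\,2^{p-1}i^{-p}$ for $i\ge 2$, so $\|Me_1\|_{\ell^2}^2=p^2 2^{2p-2}\sum_{i\ge 2}i^{-2p}$, and this series diverges exactly when $2p\le 1$. Since every column of a bounded operator must lie in $\ell^2$, the array $M$ cannot define a bounded operator on $\ell^2$ when $p\le 1/2$. This settles one implication with essentially no computation.

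For the sufficiency I would run Schur's test with the weight $q_i=i^{-1/2}$. The two estimates to verify are $\sum_{j<i}M_{i,j}q_j\le \beta\, q_i$ and $\sum_{i>j}M_{i,j}q_i\le \beta\, q_j$ for a finite constant $\beta$; these together give $\|M\|\le\beta$. The row estimate reduces to bounding $p\,i^{-p}\sum_{j=1}^{i-1}(j+1)^{p-1}j^{-1/2}$, and comparing the sum with $\int_0^{i} t^{p-3/2}\,dt=\frac{i^{p-1/2}}{p-1/2}$ (which is of the right order precisely because $p-3/2>-1$, i.e. $p>1/2$) yields a bound of the form $\frac{C}{p-1/2}\,i^{-1/2}$. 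Symmetrically, the column estimate reduces to $p\,(j+1)^{p-1}\sum_{i>j}i^{-p-1/2}$, and comparison with $\int_j^\infty t^{-p-1/2}\,dt=\frac{j^{1/2-p}}{p-1/2}$ (convergent precisely because $p+1/2>1$) yields a bound of the form $\frac{C'}{p-1/2}\,j^{-1/2}$. Both constants are finite exactly for $p>1/2$, so $M$ is bounded there.

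The main obstacle lies entirely in these sufficiency estimates at the critical exponent: the Schur weight $x^{-1/2}$ is scale-critical, so the discrete sums sit right at the edge of convergence and there is no slack. In particular one must control the discrepancy between $(j+1)^{p-1}$ and $j^{p-1}$ and the boundary terms in the sum-versus-integral comparisons, since at $p=1/2$ the row sum $\sum_j j^{p-3/2}=\sum_j j^{-1}$ diverges logarithmically, which is exactly consistent with the necessity argument above. One could instead phrase the whole argument through the exact operator-norm formula $\int_0^\infty k(1,t)t^{-1/2}\,dt=\frac{p}{p-1/2}$ for kernels homogeneous of degree $-1$, or invoke the Muckenhoupt two-weight Hardy inequality, but the elementary Schur test keeps the proof self-contained. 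It is worth emphasizing that the naive factorization $M=p\,\mathrm{diag}(i^{-p})\,S\,\mathrm{diag}((j+1)^{p-1})$ through the summation operator $S$ is of no use, since $S$ is unbounded; the boundedness for $p>1/2$ is a genuine cancellation between the decaying diagonal factors and the growth of the partial sums.
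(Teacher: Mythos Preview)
The paper does not prove this statement at all; it is quoted as a known result from \cite{Adams-McGuire1} and used as a black box in the proof of Theorem~\ref{big theorem}. So there is no proof in the present paper against which to compare your argument.

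Your argument itself is correct. The necessity direction is immediate from the first column exactly as you say, and for sufficiency the Schur test with weight $q_i=i^{-1/2}$ works cleanly: the row sum $p\,i^{-p}\sum_{j<i}(j+1)^{p-1}j^{-1/2}$ is $O(i^{-1/2})$ because $\sum_{j<i}j^{p-3/2}=O(i^{p-1/2})$ precisely when $p>1/2$, and the column sum $p\,(j+1)^{p-1}\sum_{i>j}i^{-p-1/2}$ is $O(j^{-1/2})$ because $\sum_{i>j}i^{-p-1/2}=O(j^{1/2-p})$ again precisely when $p>1/2$. The replacement of $(j+1)^{p-1}$ by $j^{p-1}$ is harmless since the ratio is bounded above and below for $j\ge 1$, so your concern about boundary discrepancies, while worth noting, does not create a genuine obstacle. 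The homogeneity observation explaining why $x^{-1/2}$ is the correct weight is a nice touch and makes the choice of Schur weight look inevitable rather than ad hoc.
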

The following result gives sufficient conditions on the decay of the norms of products of the matrices $M_n$ and the norms of the ``starting vectors" in order for the containment $\phi(z)H^2(\mathbb{D}) \subset H(K)$ to hold.
\begin{theorem} \label{big theorem}
If $M_n$ is the recursion matrix defined above and
for some $p>1/2$,  $\mu\in \mathbb{Z}^+$, $N \ge J$, and $D_1>0$, we have the estimate
$$|| M_{n+\mu-1} M_{n+\mu-2} \cdots M_{n} ||\leq (1-p\mu/n)$$

for all $n \ge N$, and
$$||\vec{v}_{n+J,n}||\leq D_1 \frac{p}{n+J}$$
for all $n$, then
$\phi(z)H^2(\mathbb{D}) \subset H(K)$. 
\end{theorem}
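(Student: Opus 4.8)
The plan is to use Douglas' Range Inclusion Lemma exactly as set up above: $\phi(z)H^2(\mathbb{D}) \subset H(K)$ holds precisely when the lower triangular matrix $C$ solving $\widehat{L} = LC$ is bounded on $\ell^2$, so the whole task reduces to proving that $C$ is a bounded operator. I would prove this by dominating $C$ entrywise, up to a fixed constant, by the Adams--McGuire matrix $M$ of Theorem \ref{matrix theorem}, and then invoking that theorem (which applies since $p > 1/2$). The domination step is legitimate because $M$ has nonnegative entries: if $|c_{j,k}| \le C'' M_{j,k}$ for all $j,k$, then for any $x,y \in \ell^2$ one has $|\langle Cx, y\rangle| \le C''\langle M|x|, |y|\rangle \le C'' \|M\|\,\|x\|\,\|y\|$, where $|x|$ is the vector of moduli. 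Writing $C = I + S$ with $S$ the strictly lower triangular part, the identity is harmless, so it suffices to bound $S$; since $M$ is also strictly lower triangular, I only need the entrywise estimate $|c_{j,n}| \le C'' M_{j,n}$ for $j > n$. Recall that the entries of $M$ in row $j$, column $k$ are of order $p\,k^{p-1}/j^{p}$ (the entry equals $\frac{p}{k+2}\big(\frac{k+2}{j+1}\big)^p$), so the target is an estimate of the shape $|c_{j,n}| \lesssim p\,n^{p-1}/j^{p}$.

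To produce this estimate I would exploit the vector recursion $\vec{v}_{n+k,n} = M_{n+k} M_{n+k-1} \cdots M_{n+J+1}\,\vec{v}_{n+J,n}$ for $k > J$ together with the two hypotheses. First note two structural points: the diagonal is $c_{n,n}=1$, and the first $J$ subdiagonal entries of column $n$ are exactly the components of the starting vector $\vec{v}_{n+J,n} = (c_{n+1,n}, \ldots, c_{n+J,n})^T$, so for $n < j \le n+J$ the bound $|c_{j,n}| \le \|\vec{v}_{n+J,n}\| \le D_1 p/(n+J)$ already has the desired form. For $j = n+k$ with $k > J$ I would use $|c_{j,n}| \le \|\vec{v}_{j,n}\|$ and estimate the operator norm of the product $M_{n+k}\cdots M_{n+J+1}$ by grouping its factors into consecutive blocks of length $\mu$. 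Each block whose starting index $m$ satisfies $m \ge N$ contributes a factor $\le (1 - p\mu/m)$ by the first hypothesis; the fewer than $\mu$ leftover factors at the top and the at most $N$ factors of low index are controlled by a uniform bound $\|M_m\| \le R$, which holds because $\{a_n\}$ is bounded and hence the entries of every $M_m$ are uniformly bounded. This yields $\|\vec{v}_{j,n}\| \le R^{\mu+N}\big(\prod_{m_i \ge N}(1 - p\mu/m_i)\big)\|\vec{v}_{n+J,n}\|$, where the $m_i$ run through an arithmetic progression of step $\mu$.

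It remains to convert the product of block factors into power-law decay. Using $\log(1-x) \le -x$ and comparing the resulting sum to an integral (the summands $p\mu/m_i$ sample $p/m$ at points spaced $\mu$ apart, so $\sum_i p\mu/m_i \ge p\log\frac{n+k}{\max(N,n)} - O(1)$), I would obtain $\prod_{m_i \ge N}(1-p\mu/m_i) \lesssim \left(\frac{n}{n+k}\right)^p$ for $n \ge N$. Combined with $\|\vec{v}_{n+J,n}\| \le D_1 p/(n+J)$ and $n+J \asymp n$, this gives $|c_{j,n}| \lesssim D_1 p\, n^{p-1}/j^{p}$ for $j > n \ge N$, which is exactly a constant multiple of $M_{j,n}$ after the harmless comparisons $n+J \asymp n+2$ and $j \asymp j+1$. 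For the finitely many columns $n < N$ the same argument applies, with the low-index crude factor and the bounded quantity $(n+J)^{p-1}$ absorbed into a larger constant, so a single $C''$ governs all columns. Hence $|c_{j,n}| \le C'' M_{j,n}$ for all $j > n$, and the domination argument finishes the proof.

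The step I expect to be most delicate is the conversion of $\prod(1 - p\mu/m_i)$ into the decay $(n/(n+k))^p$: the exponent must come out exactly equal to $p$, since the Adams--McGuire matrix sits at the boundary of boundedness (bounded iff $p > 1/2$), and any loss in the exponent would destroy the comparison. Careful bookkeeping of the constants generated by the leftover and low-index matrices, of the off-by-$\mu$ shifts in the block starting indices, and of the finitely many small columns is therefore essential; none of these individually is hard, but together they require attention so that a single constant $C''$ governs the entrywise domination uniformly.
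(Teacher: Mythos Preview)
Your proposal is correct and follows essentially the same route as the paper: reduce to boundedness of $C$ via Douglas' Lemma, bound $|c_{n+k,n}|\le\|\vec v_{n+k,n}\|$ by grouping the recursion product into length-$\mu$ blocks, control leftover and low-index factors by a uniform bound on $\|M_m\|$, turn $\prod(1-p\mu/m_i)$ into $(n/(n+k))^p$ via $\log(1-x)\le -x$ plus an integral comparison, and finish by entrywise comparison with the Adams--McGuire matrix of Theorem~\ref{matrix theorem}. The only cosmetic difference is that the paper phrases the last step as a Schur/Hadamard product with a bounded-entry multiplier, whereas you invoke the equivalent absolute-domination inequality $|\langle Cx,y\rangle|\le C''\langle M|x|,|y|\rangle$.
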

\begin{proof}
First notice that it suffices to prove that the matrix $C$ defined above is the matrix of a bounded operator on $\ell ^2$. Let $D_2=\sup_n \lVert M_n \rVert$. Note it is clear that $D_2< \infty$ as the entries in $M_n$ are uniformly bounded in $n$.

Given $n,k \in \mathbb{Z}^+$ with $k \ge N+J$, let $m$ be the largest integer such that $k-m\mu \ge N+J$.  Then $m \ge 0,$ and from the recursion
\begin{eqnarray*}
\left| c_{n+k,n} \right| & \le & \left\lVert  \vec{v}_{n+k,n} \right\rVert \\
                                  & = & \left\lVert M_{n+k}  M_{n+k-1} \cdots M_{n+k-m\mu+1} \,\vec{v}_{n+k-m\mu,n} \right\rVert \\
                                  & \leq & \left\lVert M_{n+k}  M_{n+k-1} \cdots M_{n+k-m\mu+1}  \right\rVert \lVert\vec{v}_{n+k-m\mu,n}\rVert\\
                                   & \le &  \prod_{j=1}^{m}\left( 1-p\mu/(n+k+1-j\mu) \right) \rVert  \lVert\vec{v}_{n+k-m\mu,n}\rVert\\
                                   \end{eqnarray*}
For $0<\epsilon <1$, $\log{(1-\epsilon)} < -\epsilon$. Without loss of generality we may assume $N>p \mu$, which affords
\begin{eqnarray*}
\log{ \prod_{j=1}^{m}\left( 1-p\mu/(n+k+1-j\mu) \right)} & < & \sum_{j=1}^{m}\left( -p\mu/(n+k+1-j\mu) \right) \\
                                                                            & < & \sum_{j=0}^{m-1}\left( -p\mu/(n+N+J+1+(j+1)\mu) \right) \\
& \leq & \int_{0}^{m} \left(-\frac{p\mu}{N'+\mu x}\right) dx\\
&=& -p \log(N'+\mu x) \big|_0^{m}\\
& = &\log\left(\left[\frac{N'}{N'+m \mu}\right]^{p}\right) .
\end{eqnarray*}

where $N'=n+N+J+\mu+1$.
Therefore,
\begin{eqnarray*}
|c_{n+k,n}|& \leq &  \left[\frac{N'}{N'+m \mu}\right]^{p}\lVert\vec{v}_{n+k-m\mu,n}\rVert \\
& =& \left[\frac{N'}{N'+m \mu}\right]^{p}\lVert M_{n+k-m\mu}  M_{n+k-m\mu-1,n} \cdots M_{n+J+1} \vec{v}_{n+J,n} \rVert \\
& \leq & \left[\frac{N'}{N'+m \mu}\right]^{p}D_2^{N+\mu}\lVert  \vec{v}_{n+J,n} \rVert \\
& \leq & D_2^{N+\mu} D_1\frac{p}{n+J}\left[\frac{N'}{N'+m\mu}\right]^{p}
 \end{eqnarray*}

Recalling that the Schur or Hadamard product of a bounded matrix with another matrix with entries bounded away from $0$ and $\infty$ is bounded (see Lemma 2.1 in \cite{Adams-McGuire1}), a simple application of the preceding theorem demonstrates that $C$ is bounded.

\end{proof}
\par

\section[Finite Bandwidth Reproducing Kernels]{Finite Bandwidth Reproducing Kernels}
In this section, we obtain an explicit decomposition for these spaces in analogy with \cite{Adams-McGuire1} in the case $p>1/2$ and $\lim_{n \rightarrow \infty} n(1-a_n)=p$. In doing so we substantially extend their results to arbitrary bandwidths and more general weight sequences.

\par
The following two lemmas have routine proofs and are needed for the purposes of computation.
\begin{lemma}\label{entrywise product bounds} If $A_1,A_2,\dots,A_k$ are $n \times n$ matrices with complex entries bounded  in modulus by $c$ then 
 $$||A_1 \dots A_k|| \leq n^k c^k$$
  \end{lemma}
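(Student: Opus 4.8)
The plan is to reduce the claim to the single-matrix case and then close with submultiplicativity of the operator norm. First I would establish that any $n \times n$ matrix $A = (a_{ij})$ with $|a_{ij}| \le c$ for all $i,j$ satisfies $\|A\| \le nc$, where $\|\cdot\|$ denotes the operator norm induced by the Euclidean norm on $\mathbb{C}^n$. This follows from the standard inequality $\|A\| \le \|A\|_F$, where $\|A\|_F = \bigl(\sum_{i,j} |a_{ij}|^2\bigr)^{1/2}$ is the Frobenius norm: since there are exactly $n^2$ entries, each of modulus at most $c$, one has $\|A\|_F \le (n^2 c^2)^{1/2} = nc$, and hence $\|A\| \le nc$.

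With the single-matrix estimate in hand the conclusion is immediate. Applying it to each factor and using submultiplicativity of the operator norm gives
$$\|A_1 \cdots A_k\| \;\le\; \prod_{i=1}^{k} \|A_i\| \;\le\; (nc)^{k} \;=\; n^k c^k,$$
which is exactly the claimed bound.

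As an alternative that avoids quoting the norm comparison $\|A\| \le \|A\|_F$, one could bound the entries of the product directly: each entry $(A_1\cdots A_k)_{ij} = \sum_{i_1,\dots,i_{k-1}} (A_1)_{i i_1}(A_2)_{i_1 i_2}\cdots(A_k)_{i_{k-1} j}$ is a sum of $n^{k-1}$ terms, each of modulus at most $c^k$, so every entry of $A_1\cdots A_k$ has modulus at most $n^{k-1}c^k$. Feeding this into the Frobenius bound for the product yields $\|A_1\cdots A_k\| \le \sqrt{n^2}\cdot n^{k-1}c^k = n^k c^k$, recovering the same estimate in a fully self-contained way.

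There is essentially no substantive obstacle here, consistent with the paper's remark that the proof is routine; the only point requiring a moment's care is the per-matrix estimate $\|A\| \le nc$, whether obtained via the Frobenius comparison or via the entry-counting argument above. Once that is in place, submultiplicativity finishes the proof in one line.
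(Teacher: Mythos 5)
Your proof is correct; the paper states this lemma without proof (describing it as routine), so there is no argument in the paper to compare against. Both of your routes --- the Frobenius bound $\|A\|\le\|A\|_F\le nc$ combined with submultiplicativity, and the direct entrywise estimate $|(A_1\cdots A_k)_{ij}|\le n^{k-1}c^k$ followed by the Frobenius comparison --- are valid and yield exactly the claimed bound $n^k c^k$.
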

\begin{lemma}\label{ergodic lemma} If $z_1, z_2, \ldots , z_J$ are points on the unit circle $\mathbb{T}$,
then $(1,1, \ldots ,1) \in \mathbb{C}^J$ is a limit point of the set  $\{\left( z_1^\mu,  z_2^\mu, \ldots , z_J^\mu \right): \mu \in \mathbb{Z}^{+}\}$.
\end{lemma}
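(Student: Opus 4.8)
The plan is to recognize this as a statement about simultaneous Diophantine approximation on the $J$-dimensional torus and to prove it by a pigeonhole (Dirichlet) argument. First I would write each $z_j = e^{2\pi \ii \theta_j}$ with $\theta_j \in [0,1)$, so that $z_j^\mu = e^{2\pi \ii \mu\theta_j}$ and the point $(z_1^\mu,\ldots,z_J^\mu)$ corresponds, under the identification $\mathbb{T}^J \cong (\mathbb{R}/\mathbb{Z})^J$, to the image of $\mu(\theta_1,\ldots,\theta_J)$ modulo $1$. Since $|e^{2\pi\ii t}-1| = 2|\sin(\pi t)| \le 2\pi\, \mathrm{dist}(t,\mathbb{Z})$, it suffices to show that for every $\epsilon>0$ there is a positive integer $\mu$ with $\mathrm{dist}(\mu\theta_j,\mathbb{Z})<\epsilon$ for every $j$; that is, the orbit of the identity under the translation by $g=(\theta_1,\ldots,\theta_J)$ returns arbitrarily close to the identity.

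For the main estimate I would invoke the pigeonhole principle exactly as in Dirichlet's theorem. Given a positive integer $M$, partition the torus $[0,1)^J$ into the $M^J$ half-open subcubes of side length $1/M$ and consider the $M^J+1$ iterates $x_0,x_1,\ldots,x_{M^J}$, where $x_\mu = \mu(\theta_1,\ldots,\theta_J)\bmod 1$. Two of them, say $x_a$ and $x_b$ with $0\le a<b\le M^J$, must lie in the same subcube, so that each coordinate of $x_b-x_a$ is within $1/M$ of an integer. Because the map $\mu\mapsto x_\mu$ is a homomorphism, $x_b - x_a = x_{b-a}$, and setting $\mu := b-a$ gives a positive integer with $\mathrm{dist}(\mu\theta_j,\mathbb{Z})<1/M$ for all $j$. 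Letting $M\to\infty$ produces positive integers $\mu$ for which $\max_j|z_j^\mu-1|<2\pi/M\to 0$, which is precisely the claim.

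There is no deep obstacle here --- the result is a standard consequence of compactness of the torus --- so the work is really in presenting the pigeonhole cleanly and in two bookkeeping points. The first is that one must take the difference $b-a$ (rather than a single iterate) to land near the identity, and it is important that $b>a$ so that $\mu$ is a genuine \emph{positive} integer, matching the index set $\mathbb{Z}^+$; the conjugates $w_j=\overline{z_j}$ are also unimodular, so the companion statement for the $w_j$ that is needed in the applications follows identically. The second, more conceptual, point concerns the phrase ``limit point'': the argument shows that $(1,\ldots,1)$ lies in the closure of $\{(z_1^\mu,\ldots,z_J^\mu):\mu\in\mathbb{Z}^+\}$, and when not all $z_j$ are roots of unity the approximating points are distinct from $(1,\ldots,1)$, so it is a genuine accumulation point; in the degenerate all-roots-of-unity case the orbit is finite but $(1,\ldots,1)$ is then actually attained, so the conclusion needed downstream (arbitrarily good approximation) holds in every case. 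An alternative, slightly slicker route would be to observe that the closure of the orbit is a closed subgroup of the compact group $\mathbb{T}^J$ and hence contains the identity, but the pigeonhole argument has the advantage of being elementary and self-contained.
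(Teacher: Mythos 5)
Your proof is correct and is essentially the argument the paper compresses into its one-line proof (``repeatedly apply the compactness of $\mathbb{T}$''): two of the iterates $(z_1^a,\ldots,z_J^a)$ and $(z_1^b,\ldots,z_J^b)$ must lie close together, and taking $\mu=b-a$ brings the orbit back near $(1,\ldots,1)$. Your pigeonhole write-up simply makes that compactness step explicit and quantitative, so no further comparison is needed.
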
 
\begin{proof}
Repeatedly apply the compactness of $\mathbb{T}$.
\end{proof}
\par
We now proceed to the statement and proof of the main lemma. 
\begin{lemma}\label{big lemma} Let $M_n$ denote the recursion matrix defined above, $\{a_n\}$ a non-vanishing sequence satisfying $\lim_{n \rightarrow \infty} n(1-a_n)=p$ where $p>1/2$, , and  $X$ the change of basis matrix whose $j$th column is the eigenvector 
$\vec{\nu}_j$ of the limiting matrix $M_\infty. $ If $\widehat{M}_n=X^{-1}M_n X,$
then for all $\varepsilon>0,$ there exist positive integers $\mu$ and $N$ such that for all $n>N$ $$||\widehat{M}_{n+\mu-1}\ldots \widehat{M}_{n}|| \leq 1-\frac{(\mu p-\varepsilon)}{n}.$$
\end{lemma}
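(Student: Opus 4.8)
The plan is to treat the product as a first-order perturbation in $1/n$ of its limit. Since $\lim_n n(1-a_n)=p$, one has $a_m^q-1 = -qp/m + o(1/m)$ for each fixed exponent $q$, and every exponent and index-shift occurring in $M_n$ is bounded; so I would first record the expansion
\begin{equation*}
M_n = M_\infty + \tfrac{p}{n}R + o(1/n),
\end{equation*}
where $R$ is the fixed matrix whose only nonzero row is the last, with $(R)_{J,l}=(J-l+1)\beta_{J-l+1}$. Conjugating by $X$ and writing $D = X^{-1}M_\infty X = \mathrm{diag}(w_1,\dots,w_J)$ and $\widehat R = X^{-1}RX$ turns this into $\widehat M_n = D + \tfrac pn \widehat R + o(1/n)$. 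The whole point of passing to the eigenbasis is that $D$ is diagonal with unimodular entries, so $\lVert D^\mu\rVert = 1$ \emph{exactly}; in the original basis the companion structure of $M_\infty$ could make $\lVert M_\infty^\mu\rVert$ grow, which would preclude any bound below $1$.

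Next, for a fixed $\mu$ I would expand the product to first order,
\begin{equation*}
P_n := \widehat M_{n+\mu-1}\cdots \widehat M_n = D^\mu + \tfrac pn S_\mu + o(1/n), \qquad S_\mu = \sum_{j=0}^{\mu-1} D^{\mu-1-j}\,\widehat R\, D^{\,j},
\end{equation*}
the replacement of $n+j$ by $n$ and all higher-order terms costing only $o(1/n)$ once $\mu$ is frozen. Factoring out the unitary $D^\mu$ reduces the task to estimating $\lVert I + \tfrac pn T + o(1/n)\rVert$ with $T = D^{-\mu}S_\mu$, whose entries are $(T)_{ii} = \mu\, w_i^{-1}\widehat R_{ii}$ and $(T)_{ik} = w_i^{-1}\widehat R_{ik}\,\frac{(w_k/w_i)^\mu-1}{(w_k/w_i)-1}$ for $i\neq k$. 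The decisive algebraic input is the value of $\widehat R_{ii} = \ell_i^{\,T} R\,\vec{\nu}_i$, where $\ell_i^{\,T}$ is the $i$th row of $X^{-1}$. Since $R\,\vec{\nu}_i = \phi'(z_i)e_J$, and $(\ell_i)_J$ is the constant term $L_i(0)$ of the Lagrange polynomial $L_i(z)=\prod_{k\neq i}\frac{z-z_k}{z_i-z_k}$, the product $\phi'(z_i)(\ell_i)_J$ telescopes to $-1/z_i = -w_i$. Hence $(T)_{ii} = -\mu$ for every $i$, so $T = -\mu I + T'$ with $T'$ purely off-diagonal.

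The main obstacle is that $T'$ must be made \emph{arbitrarily} small, not merely bounded, in order to reach an arbitrary $\varepsilon$: the geometric factor $\frac{(w_k/w_i)^\mu-1}{(w_k/w_i)-1}$ is always bounded (by $2/\rho$ with $\rho:=\min_{i\neq k}|w_k/w_i-1|>0$), but that only yields a fixed constant, which may exceed $\varepsilon/p$. This is exactly where Lemma~\ref{ergodic lemma} is indispensable: it supplies $\mu$ with $|w_j^\mu-1|<\delta$ for every $j$ simultaneously, forcing each numerator $(w_k/w_i)^\mu-1$ to be $O(\delta)$ while the denominators stay bounded below by $\rho$. Thus $\lVert T'\rVert \le C\delta$ with $C$ depending only on $\widehat R$ and $\rho$ and \emph{independent of} $\mu$. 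Using $\lVert D^{\pm\mu}\rVert = 1$ to factor out the unitary, I would conclude, for $n$ large,
\begin{equation*}
\lVert P_n\rVert \le \lVert (1-\tfrac{\mu p}{n})I + \tfrac pn T'\rVert + o(1/n) \le \big(1-\tfrac{\mu p}{n}\big) + \tfrac pn C\delta + o(1/n).
\end{equation*}
The quantifiers then resolve in the order: given $\varepsilon$, first choose $\delta$ with $pC\delta < \varepsilon/2$; then choose $\mu$ from Lemma~\ref{ergodic lemma}; finally choose $N$ so large that the (now $\mu$-dependent) $o(1/n)$ term is below $\varepsilon/(2n)$ and $1-\mu p/n>0$. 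This yields $\lVert \widehat M_{n+\mu-1}\cdots\widehat M_n\rVert \le 1-(\mu p-\varepsilon)/n$ for all $n>N$, as required.
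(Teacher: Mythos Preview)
Your proposal is correct and follows essentially the same route as the paper: linearize $M_{n+k}=M_\infty+\frac{p}{n}B+o(1/n)$, expand the $\mu$-fold product to first order in $1/n$, compute the action in the eigenbasis to see the diagonal contribution is exactly $-\mu p/n$, and invoke Lemma~\ref{ergodic lemma} to force the off-diagonal geometric sums $(\,(w_k/w_i)^\mu-1\,)/(\,(w_k/w_i)-1\,)$ to be small before absorbing the $\mu$-dependent $o(1/n)$ remainder by choosing $N$. The only cosmetic differences are that you identify the rows of $X^{-1}$ via Lagrange interpolation (the paper simply cites a Gaussian elimination giving $e_J=\sum_j -w_j/\phi'(z_j)\,\vec\nu_j$, which is the same fact), and you factor out the unitary $D^\mu$ to reduce to $\lVert(1-\mu p/n)I+\frac{p}{n}T'\rVert$, whereas the paper bounds the conjugated matrix entrywise by a matrix $P$ with $(1-\mu p/n)$ on the diagonal and $\varepsilon/(2Jn)$ off it; both reach the same bound $1-(\mu p-\varepsilon)/n$.
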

 \begin{proof}
 \par 
 
Let $\mu$ be a large positive integer to be chosen later and fix $k$   with $0 \leq k < \mu-1.$ We will choose $N$ later based on an appropriate choice of $\mu$. Linearize $M_{n+k}$ by writing $M_{n+k}=M_\infty+(p/n)B+R_{n,k},$ where
$B$ is the $J$ by $J$ matrix whose first $J-1$ rows are zero and whose last row is
$$ \begin{pmatrix} 
J \beta_J & (J-1)\beta_{J-1} & (J-2)\beta_{J-2} &\dots & 2\beta_2 & \beta_1 \end{pmatrix}$$ 
and $R_{n,k}$ is the $J$ by $J$ matrix whose first $J-1$ rows are zero and whose $J$th row is
$$\begin{pmatrix}
\left(1-a_{n-J+k+1}^J-\frac{pJ}{n}\right) \beta_J & \dots & \left(1-a_{n-1+k}^2-\frac{2p}{n}\right)\beta_2 & \left(1-a_{n+k}-\frac{p}{n}\right)\beta_1 \end{pmatrix}$$
\par
Since $R_{n,k}$ can be bounded entrywise by $\frac{E(n)}{n} $, where $E(n)$ is some function satisfying $\lim_{n\rightarrow \infty} E(n) =0,$ it follows by Lemma \ref{entrywise product bounds} that $||R_{n,k}||\le \frac{J E(n)}{n}$. We compute
\begin{eqnarray*}
\widehat{M}_{n+\mu-1}\ldots \widehat{M}_{n}&=& X^{-1}  \prod_{k=0}^{\mu-1} (M_\infty+\frac{pB}{n}+R_{n,k})X \\
&=& X^{-1}\left( M_\infty^{\mu}+ \sum_{k=0}^{\mu-1} M_\infty^k \frac{pB}{n} M_\infty^{\mu-1-k} + R \right) X,
\end{eqnarray*}
\noindent where $R$ is the sum of all products in the expansion involving the matrices $R_{n,k}$. (There are $3^{\mu}-\mu-1$ such terms). Thus,  $||X^{-1}RX|| < \frac{C_1 E(n)}{n}$ where $C_1$ is a constant that depends only on $J$ and $\mu$.
\par
The crucial norm estimate will come from 
$$ X^{-1}\left(M_\infty^{\mu}+\sum_{k=0}^{\mu-1} M_\infty^k \frac{B}{n} M_\infty^{\mu-1-k}\right)X,$$ so we turn to a computation of this norm.
A straightforward Gaussian elimination shows that the vector $\vec{\nu}_0 = \left( 0,0, \ldots , 0,1\right)$ can be expressed in terms of the eigenvectors for $M_\infty$ as $\sum_{j=1}^J \, - w_j/\phi^\prime (z_j)\vec{\nu}_j$.
\par
To compute the norm of $X^{-1}\left(M_\infty^{\mu} + \sum_{k=0}^{\mu-1} M_\infty^k \frac{B}{n} M_\infty^{\mu-1-k}\right)X$, consider the action of 
$\sum_{k=0}^{\mu-1} M_\infty^k \frac{B}{n} M_\infty^{\mu-1-k}$ on $\vec{\nu}_h$ for $h \in \{1,2, \ldots , J\}$.  
Note that
\newline
$\phi(z)=1+\sum_{k=1}^J \beta_kz^k = \prod_{j=1}^J (1-w_j z)$ and notice that
$$\phi^\prime(z_h)=-w_h \prod_{j:j \ne h} (1-w_j z_h)=\sum_{k=1}^J k\beta_kz_h^{k-1} .$$
Now, $z_j$ is on the unit circle, so $ (1-w_j z_h) =w_j (z_j-z_h).$
\newline
Thus,  

$$\phi^\prime(z_h) =(-\prod_{j=1}^J w_j   ) \prod_{j:j \ne h} (z_j - z_h) .$$

Therefore,  
\begin{eqnarray*}
B\vec{\nu}_h & = &  \phi^\prime(z_h) \vec{\nu}_0\\
&=& \phi^\prime(z_h) \sum_{j=1}^J  - w_j/\phi^\prime (z_j)\vec{\nu}_j\\
&=&-w_h \vec{\nu}_h- \sum_{j:j\ne h}  w_j \frac{ \phi^\prime(z_h)}{\phi^\prime (z_j)}\vec{\nu}_j\\
\end{eqnarray*}
Thus,
\begin{eqnarray*}
\sum_{k=0}^{\mu-1} M_\infty^k \frac{pB}{n} M_\infty^{\mu-1-k} \vec{\nu}_h &=& \sum_{k=0}^{\mu-1} w_h^{\mu-1-k}M_\infty^k \frac{pB}{n} \vec{\nu}_h\\
 &=&- \frac{p}{n}w_h^{\mu-1}\sum_{k=0}^{\mu-1} w_h^{-k}M_\infty^k 
 \left(w_h \vec{\nu}_h + \sum_{j:j\ne h}  w_j \frac{ \phi^\prime(z_h)}{\phi^\prime (z_j)}\vec{\nu}_j\right)\\
 &=&- \frac{p}{n}w_h^{\mu-1}\sum_{k=0}^{\mu-1} w_h^{-k} 
 \left(w_h^{k+1} \vec{\nu}_h + \sum_{j:j\ne h}  w_j^{k+1} \frac{ \phi^\prime(z_h)}{\phi^\prime (z_j)}\vec{\nu}_j\right)\\
 &=&- \frac{\mu p}{n}w_h^{\mu}\vec{\nu}_h
+ \sum_{j:j\ne h}  - \frac{p}{n}\frac{w_j}{w_h^{1-\mu}}\left( \frac{1-(w_j/w_h)^\mu}{1-w_j/w_h}\right) \frac{ \phi^\prime(z_h)}{\phi^\prime (z_j)}\vec{\nu}_j\\
 \end{eqnarray*}
 By Lemma \ref{ergodic lemma}, for each $\varepsilon >0$, there is a $\mu \in \mathbb{N}$ such that each of the modulus of each of coefficients of
 $v_j$ for $j \neq h$ above is less than  $\frac{\varepsilon}{2Jn}$.

Since
$M_\infty^\mu \vec{\nu}_h=w_h^\mu \vec{v_h}$,
it follows that the norm of $X^{-1}\left(M_\infty^{\mu}+\sum_{k=0}^{\mu-1} M_\infty^k \frac{B}{n} M_\infty^{\mu-1-k}\right)X$ is bounded above by the norm of the matrix
$$P=\begin{pmatrix} \left(1-\frac{\mu p}{n}\right) & \frac{\varepsilon}{2Jn} & \frac{\varepsilon}{2Jn} & \frac{\varepsilon}{2Jn}& \dots & \frac{\varepsilon}{2Jn}\\
\frac{\varepsilon}{2Jn} & \left(1-\frac{\mu p}{n}\right) & \frac{\varepsilon}{2Jn}  &\frac{\varepsilon}{2Jn}  &\dots & \frac{\varepsilon}{2Jn}  \\
\frac{\varepsilon}{2Jn}  & \frac{\varepsilon}{2Jn}  & \left(1-\frac{\mu p}{n}\right)& \frac{\varepsilon}{2Jn}  &\dots & \frac{\varepsilon}{2Jn}  \\
\frac{\varepsilon}{2Jn}  & \frac{\varepsilon}{2Jn}  & \frac{\varepsilon}{2Jn}   & \left(1-\frac{\mu p}{n}\right)& \dots & \frac{\varepsilon}{2Jn} \\
\vdots & \vdots & \vdots & \vdots & \ddots & \vdots\\
\frac{\varepsilon}{2Jn}  & \frac{\varepsilon}{2Jn}  & \frac{\varepsilon}{2Jn}   &  \frac{\varepsilon}{2Jn}  & \cdots &  \left(1-\frac{\mu p}{n}\right)\\
 \end{pmatrix}$$

But from the triangle inequality we have the estimate $$||P|| \leq  \left(1-\frac{\mu p}{n}\right) + \frac{\varepsilon}{2n}$$
\newline
\color{black}
 Putting all of our calculations together and choosing $N$ large enough so that for $n>N$, $E(n)<\frac{\varepsilon}{2C_1}$, we deduce that, for all $n>N:$
$$\left\Vert\widehat{M}_{n+\mu-1}\ldots \widehat{M}_{n}\right\Vert \leq  1-\frac{\mu p}{n} + \frac{\varepsilon}{2n} + \frac{\varepsilon}{2n}= 1-\frac{(\mu p-\varepsilon)}{n} \ .$$

\end{proof}

Now we are ready to prove the containment result.

\begin{theorem} \label{containment}
 If $H(K)$ denotes the reproducing kernel Hilbert space with orthonormal basis $$f_{n}(z)=\phi(a_nz)z^n$$ satisfying $p > 1/2$ and $\lim_{n\rightarrow \infty} n(1-a_n)=p$, then    $\phi(z)H^2(\mathbb{D}) \subset H(K)$.
\end{theorem}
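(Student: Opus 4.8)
The plan is to prove that the lower-triangular matrix $C$ solving $\widehat{L} = LC$ is bounded on $\ell^2$; by Douglas' Range Inclusion Lemma this is precisely the containment $\phi(z)H^2(\mathbb{D}) \subset H(K)$. Since the entries of $C$ obey the vector recursion $\vec{v}_{n+k,n} = M_{n+k}\vec{v}_{n+k-1,n}$, the whole question reduces to controlling long products of the $M_n$ applied to the starting vectors $\vec{v}_{n+J,n}$, which is exactly what Theorem \ref{big theorem} is designed to exploit. The difficulty is that $M_\infty$ is not normal and its eigenvalues $w_1,\dots,w_J$ all lie on $\mathbb{T}$, so in the standard basis the products $M_{n+\mu-1}\cdots M_n$ need not have norm below $1$; this is why I would work in the eigenbasis $X$ of $M_\infty$, where Lemma \ref{big lemma} delivers the required contraction for the conjugates $\widehat{M}_n = X^{-1}M_n X$.

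First I would fix $\varepsilon = \tfrac12\bigl(p-\tfrac12\bigr) > 0$ and apply Lemma \ref{big lemma} to obtain $\mu \ge 1$ and $N \ge J$ with $\|\widehat{M}_{n+\mu-1}\cdots \widehat{M}_n\| \le 1 - (\mu p - \varepsilon)/n$ for $n > N$. Setting $\tilde p = p - \varepsilon/\mu$, this reads $\|\widehat{M}_{n+\mu-1}\cdots \widehat{M}_n\| \le 1 - \tilde p\,\mu/n$, and since $\mu \ge 1$ gives $\varepsilon/\mu \le \varepsilon < p - \tfrac12$, we have $\tilde p > \tfrac12$. This is the product-decay hypothesis of Theorem \ref{big theorem}, now valid for the conjugated recursion with exponent $\tilde p > 1/2$. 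Second, I would verify the starting-vector bound: putting $a_n \equiv 1$ in the recursion $(\ast)$ forces $c_{n+k,n} = 0$ for $1 \le k \le J$ (corresponding to $C = I$ when $f_n = \phi z^n$), so the size of $\vec{v}_{n+J,n}$ is controlled entirely by the deviations $1 - a_m^i$, each $O(1/n)$ because $n(1-a_n)\to p$. A short induction on $k$ through $(\ast)$ then yields $\|\vec{v}_{n+J,n}\| = O(1/n)$, whence $\|\widehat{\vec{v}}_{n+J,n}\| \le \|X^{-1}\|\,\|\vec{v}_{n+J,n}\| \le D_1\,p/(n+J)$ after absorbing constants.

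With both ingredients in hand I would rerun the estimate from the proof of Theorem \ref{big theorem} in the conjugated coordinates. Because $\widehat{\vec{v}}_{n+k,n} = \widehat{M}_{n+k}\cdots \widehat{M}_{n+J+1}\,\widehat{\vec{v}}_{n+J,n}$ and $\sup_n\|\widehat{M}_n\| < \infty$ (the entries of $M_n$ are uniformly bounded and $X$ is fixed), grouping this product into blocks of length $\mu$, bounding each block via the contraction above and the fewer than $\mu$ leftover factors by powers of $\sup_n\|\widehat{M}_n\|$, produces $\|\widehat{\vec{v}}_{n+k,n}\| \le D\,\tfrac{p}{n}\bigl[\tfrac{N'}{N'+m\mu}\bigr]^{\tilde p}$ exactly as before, with $m$ the number of complete blocks. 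Translating through $|c_{n+k,n}| \le \|X\|\,\|\widehat{\vec{v}}_{n+k,n}\|$ transfers this decay to the entries of $C$ up to the fixed factor $\|X\|$. Finally, the finitely many diagonals with $k < N+J$ assemble into a bounded banded operator, while the remaining entries are dominated entrywise by a constant multiple of the Adams--McGuire matrix of Theorem \ref{matrix theorem}; since $\tilde p > 1/2$ that matrix is bounded, so the Schur/Hadamard product argument gives boundedness of $C$, and Douglas' Lemma concludes.

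I expect the main obstacle to be the reconciliation flagged in the first paragraph: Theorem \ref{big theorem} is stated for the raw matrices $M_n$, but the only available decay comes from Lemma \ref{big lemma} for the conjugates $\widehat{M}_n$. The point that makes this harmless is that moving between the two systems costs only the fixed condition number $\|X\|\,\|X^{-1}\|$ of the eigenbasis, which never touches the decay exponent $\tilde p$ --- and it is $\tilde p > 1/2$, not the size of any constant, that ultimately forces boundedness through the comparison with Theorem \ref{matrix theorem}.
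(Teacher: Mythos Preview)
Your proposal is correct and follows essentially the same route as the paper: apply Lemma~\ref{big lemma} to obtain the block contraction $1-\mu\tilde p/n$ with $\tilde p=p-\varepsilon/\mu>1/2$, verify the starting-vector bound $\|\vec v_{n+J,n}\|=O(1/n)$ by factoring out $(1-a_n)$ in the recursion~$(\ast)$, and then invoke Theorem~\ref{big theorem}. The only cosmetic difference is that you rerun the Theorem~\ref{big theorem} estimate in the conjugated coordinates $\widehat M_n$, whereas the paper transfers the contraction back to the original $M_n$ at the cost of the fixed factor $\|X\|\,\|X^{-1}\|$ and remarks that this extra constant is harmless in that proof; both devices are equivalent since, as you correctly note, the constant does not affect the exponent $\tilde p$.
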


\begin{proof}
This is a simple application of Theorem \ref{big theorem} and Lemma \ref{big lemma}. First, choose $\varepsilon>0$ sufficiently small so that $p-\varepsilon>1/2$.  By Lemma \ref{big lemma}, there exist positive integers $\mu$ and $N$ such that for all $n>N$ $$||\widehat{M}_{n+\mu-1}\ldots \widehat{M}_{n}|| \leq 1-\frac{(\mu p-\varepsilon)}{n}=1-\frac{\mu p'}{n},$$
where $p'=p-\frac{\varepsilon}{\mu}>1/2$.
Note 
\begin{eqnarray*}
\left \lVert M_{n+\mu-1} M_{n+\mu-2} \cdots M_{n}  \right\rVert 
                                  & = & \left\lVert X \widehat{M}_{n+\mu-1} \widehat{M}_{n+\mu-2}  \cdots \widehat{M}_{n}X^{-1}\ \right\rVert \\
                                  & \leq & \left\lVert \widehat{M}_{n+k}  \widehat{M}_{n+k-1} \cdots \widehat{M}_{n+k-m\mu+1}  \right\rVert 
                                   \lVert X \rVert   \lVert X^{-1} \rVert \\
                                   & \le & 
\lVert X \rVert   \lVert X^{-1} \rVert                                   \left( 1-\frac{\mu p'}{n} \right) \\
                                   \end{eqnarray*}
The extra constant is harmless in regards to the proof of Theorem \ref{big theorem}.\\
                                    
It only remains to check the growth rate on the starting vectors $\vec{v}_{n+J,n}$, using our previous notation. We claim that for each $1 \leq j \leq J$, there exists a bounded sequence of complex numbers $\{\alpha_{n,j}\}_n$, such that for all $n \in \mathbb{N}$, $c_{n+j,n}=(1-a_n)\alpha_{n,j}$. Note that this implies there exists a positive real constant $M$
such that $||\vec{v}_{n+J,n}||\leq M|1-a_n|$, which in turn implies the starting vectors satisfy the growth rate of Theorem \ref{big theorem}.

We prove the claim by induction on $j$. For the base case, note that $c_{n+1,n}=\beta_1-a_n \beta_1 c_{n,n}=\beta_1(1-a_n)$. Then notice that \begin{eqnarray*}
c_{n+j,n}& = & \beta_j(1-a_n^j)-\sum_{i=1}^{j-1}\beta_{i}a_{n+j-i}^i c_{n+j-i,n}\\
& = & \beta_j(1+a_n+a_n^2+\dots+a_n^{j-1})(1-a_n)-\sum_{i=1}^{j-1}\beta_{i}a_{n+j-i}^i(1-a_n)\alpha_{n,j-i}\end{eqnarray*}

By induction, the claim holds.

As the hypotheses of Theorem \ref{big theorem} are evidently satisfied, the containment follows. 
\end{proof}
\begin{example} \label{bad example}
This example shows that if $a_n \rightarrow 1$ more rapidly then $a_n =1-p/n$, then the containment of the previous result does not occur.  Specifically, if $J=2$, $z_1=1$,  $z_2=-1$, and  $a_n=1-\frac{1}{(n+2)^2}$, then
 $(1-z)(1+z) H^2(\mathbb{D}) \subseteq H(K)$ if and only if there is a bounded matrix $C$ satisfying $\hat{L}=LC$, where 

$$ \begin{pmatrix} 
1 & 0 & 0 & \cdots \\
0 & 1 & 0& \cdots\\
-1 & 0 & 1 & \cdots \\
0 & -1 & 0 & \ddots\\
0 & 0 & -1 & \ddots\\
\vdots & \vdots & \vdots &  \ddots
 \end{pmatrix} 
 =
  \begin{pmatrix} 
1 & 0 & 0 &\cdots \\
0 & 1 & 0& \cdots\\
-\frac{9}{16}& 0 & 1 & \cdots\\
0 & -\frac{64}{81}& 0 & \ddots\\
0 & 0 & -\frac{225}{256} & \ddots\\
\vdots & \vdots & \vdots &  \ddots
 \end{pmatrix}  \begin{pmatrix} 
c_{0,0} &0 &0 &\cdots \\
c_{1,0} & c_{1,1} & 0 & \cdots\\
c_{2,0} & c_{2,1} & c_{2,2} & \ddots\\
c_{3,0} & c_{3,1} & c_{3,2} & \ddots\\
c_{4,0} & c_{4,1} & c_{4,2} & \ddots\\
c_{5,0} & c_{5,1} & c_{5,2} &  \ddots\\
\vdots & \vdots & \vdots & \ddots\\

 \end{pmatrix} $$
 
The entries of  $C$ are completely determined by this equation and it is straightforward to show that  $\lim c_{n,0}\neq 0$ and thus that $C$ is not bounded. The same argument works for $a_n=1-\frac{1}{(n+2)^p}$ with $p>1$.
\end{example}
\par
Before tackling the second half of the decomposition, a few different results will be required. First, to ensure this decomposition actually makes sense we need to establish that the natural domain of $H(K)$, which we denote by $\mathcal{D}$, of $H(K)$ consists of the unit disc $\mathbb{D}$ plus the $J$ ``extra" points on the boundary $z_1,z_2,\dots,z_J$. 
\begin{proposition}\label{natural domain} If $\mathcal{D}$ denotes the natural domain of the space $H(K)$, then $$\mathcal{D}=\mathbb{D} \cup \{z_1,z_2, \ldots z_J \}$$

\begin{proof}
It suffices to verify that for $1 \leq j \leq J$ we have $\sum_{n=0}^{\infty}|f_n(z_j)|^2<\infty$. But this is clear, as $\sum_{n=0}^{\infty}|f_n(z_j)|^2 \lesssim \sum_{n=0}^{\infty} |1-a_n|^2$ which is comparable to  $\sum_{n=0}^{\infty}\frac{p^2}{n^2}<\infty$.

\end{proof}

\end{proposition}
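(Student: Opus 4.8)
The plan is to unwind the definition $\mathcal{D}=\{z\in\mathbb{C}:\sum_{n=0}^\infty |f_n(z)|^2<\infty\}$ and examine it region by region, using that $|f_n(z)|=|z|^n|\phi(a_nz)|$ and that, since $a_n\to 1$ and $\phi$ is a polynomial, $\phi(a_nz)\to\phi(z)$ for every fixed $z$. The organizing fact is that the only zeros of $\phi(z)=\prod_{j=1}^J(1-w_jz)$ on $\mathbb{T}$ are exactly $z_1,\dots,z_J$: indeed $1-w_jz=0$ forces $z=1/w_j=z_j$ since $w_j=\overline{z_j}$ and $|z_j|=1$. Hence the boundary $\mathbb{T}$ splits naturally into the candidate ``extra'' points $z_j$ and the rest, while $\phi$ is nonvanishing off $\mathbb{T}$.

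First I would dispatch the interior and exterior. For $z\in\mathbb{D}$, $|a_nz|=|a_n||z|\to|z|<1$, so the $|\phi(a_nz)|$ are uniformly bounded in $n$, and $\sum_n|f_n(z)|^2\le C\sum_n|z|^{2n}<\infty$; thus $\mathbb{D}\subseteq\mathcal{D}$. For $|z|>1$, $\phi(a_nz)\to\phi(z)\neq 0$, so $|f_n(z)|=|z|^n|\phi(a_nz)|\to\infty$ and $z\notin\mathcal{D}$. (Alternatively, the cited Adams--McGuire result already guarantees $\mathcal{D}$ is a disk about the origin together with at most $J$ boundary points, and these two computations pin the disk down to exactly $\mathbb{D}$, reducing the problem to identifying the extra points.)

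Next comes the boundary. For $z\in\mathbb{T}$ with $z\notin\{z_1,\dots,z_J\}$ we have $\phi(z)\neq 0$, so $|f_n(z)|=|\phi(a_nz)|\to|\phi(z)|\neq 0$ and the series diverges; such points are excluded. The crux is to show each $z_j\in\mathcal{D}$, and this is the only place an estimate is needed. I would factor
$$\phi(a_nz_j)=(1-a_nw_jz_j)\prod_{k\neq j}(1-a_nw_kz_j)=(1-a_n)\prod_{k\neq j}(1-a_nw_kz_j),$$
collapsing the $j$th factor using $w_jz_j=|z_j|^2=1$. The surviving product converges to $\prod_{k\neq j}(1-w_kz_j)\neq 0$ (distinctness of the $z_k$ keeps each factor away from zero) and is therefore bounded uniformly in $n$, giving $|f_n(z_j)|=|\phi(a_nz_j)|\le C_j|1-a_n|$.

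Finally, $\lim_n n(1-a_n)=p$ yields $|1-a_n|\sim |p|/n$, so $\sum_n|f_n(z_j)|^2\lesssim\sum_n|1-a_n|^2\asymp\sum_n n^{-2}<\infty$, placing $z_j$ in $\mathcal{D}$ and completing $\mathcal{D}=\mathbb{D}\cup\{z_1,\dots,z_J\}$. The main (and essentially only) obstacle is the cancellation at $z_j$: one must notice that the factor $1-a_nw_jz_j$ of $\phi(a_nz_j)$ degenerates precisely to $1-a_n$ while the remaining product stays bounded, so that the boundary decay of $f_n(z_j)$ is controlled entirely by the rate $a_n\to 1$. It is worth noting that this argument uses only $|1-a_n|=O(1/n)$, i.e.\ that $n(1-a_n)$ is bounded, so the domain identification does not in fact require $p>1/2$.
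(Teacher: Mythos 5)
Your proof is correct and follows essentially the same route as the paper: the heart of both arguments is the cancellation $1-a_nw_jz_j=1-a_n$, giving $|f_n(z_j)|\lesssim|1-a_n|\asymp p/n$ and hence $\sum_n|f_n(z_j)|^2<\infty$. You simply make explicit the surrounding steps (inclusion of $\mathbb{D}$, exclusion of $|z|\ge 1$ away from the $z_j$) that the paper compresses into ``it suffices to verify,'' and your closing observation that only $|1-a_n|=O(1/n)$ is needed is a fair, accurate aside.
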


Next, we proceed to state two technical propositions that we will need in the forthcoming proof. The proofs are postponed to the next section.  The second theorem relies on results from the theory of symmetrical polynomials.

\begin{proposition}\label{independence}
The matrix $A$ defined by
$$A=\begin{pmatrix}K(z_1,z_1) & K(z_2,z_1) &\cdots & K(z_J,z_1)\\ 
                                 K(z_1,z_2) & K(z_2,z_2) &\cdots & K(z_J,z_2)\\
                                 \vdots &   \vdots& \cdots &   \vdots\\
                                 K(z_1,z_J) & K(z_2,z_J) &\cdots & K(z_J,z_J)
         \end{pmatrix}$$
is invertible.
\end{proposition}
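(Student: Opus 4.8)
The plan is to recognize $A$ as a Gram matrix and reduce its invertibility to a linear independence statement about explicit $\ell^2$ vectors. Set $u_j=(f_n(z_j))_{n\ge 0}$. The summability $\sum_{n}|f_n(z_j)|^2<\infty$ established in the proof of Proposition \ref{natural domain} shows $u_j\in\ell^2$, and the $(r,c)$ entry of $A$ is
\[
K(z_c,z_r)=\sum_{n=0}^{\infty}f_n(z_c)\overline{f_n(z_r)}=\langle u_c,u_r\rangle_{\ell^2}.
\]
Hence $A$ is the Gram matrix of $u_1,\dots,u_J$; it is Hermitian and positive semidefinite (consistent with $K(z,w)=\overline{K(w,z)}$), and it is invertible exactly when $u_1,\dots,u_J$ are linearly independent in $\ell^2$. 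So the entire proposition reduces to proving this independence.

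The second step is to extract the decisive factor from $f_n(z_j)$. Since $w_j z_j=\overline{z_j}z_j=|z_j|^2=1$, the $i=j$ term in $\phi(a_nz_j)=\prod_{i=1}^J(1-a_n w_i z_j)$ is exactly $1-a_n$, so that $f_n(z_j)=(1-a_n)\,z_j^{\,n}\,p_n(z_j)$, where $p_n(z_j)=\prod_{i\ne j}(1-a_n w_i z_j)$. Because the sequence $\{a_n\}$ is non-vanishing, $1-a_n\ne 0$, so a relation $\sum_j\lambda_j f_n(z_j)=0$ holding for all $n$ is equivalent to $\sum_j\lambda_j z_j^{\,n}p_n(z_j)=0$ for all $n$. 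As $a_n\to 1$ we have $p_n(z_j)\to p_\infty(z_j):=\prod_{i\ne j}(1-w_i z_j)=\prod_{i\ne j}(1-\overline{z_i}z_j)$, and each factor is nonzero since the $z_i$ are distinct; thus $p_\infty(z_j)\ne 0$.

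The third step is a perturbed Vandermonde determinant argument. For a parameter $m$ form the $J\times J$ matrix $N^{(m)}$ with entries $z_j^{\,m+k}p_{m+k}(z_j)$ for $0\le k\le J-1$; if $\det N^{(m)}\ne 0$ for some $m$, then the $J$ equations indexed by $n=m,\dots,m+J-1$ force $\lambda_1=\cdots=\lambda_J=0$ and we are done. Factoring $z_j^{\,m}$ (of modulus $1$) out of each column gives $|\det N^{(m)}|=|\det(z_j^{\,k}p_{m+k}(z_j))_{k,j}|$. Since $p_{m+k}(z_j)\to p_\infty(z_j)$ as $m\to\infty$ uniformly over the finite range $0\le k\le J-1$, and the determinant depends continuously on its entries, this quantity converges to $\bigl|\prod_j p_\infty(z_j)\bigr|\cdot\bigl|\det(z_j^{\,k})_{k,j}\bigr|$, i.e. $\bigl|\prod_j p_\infty(z_j)\bigr|$ times the modulus of a Vandermonde determinant in the distinct points $z_1,\dots,z_J$. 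Both factors are nonzero, so $\det N^{(m)}\ne 0$ for all sufficiently large $m$, which yields the desired independence and hence the invertibility of $A$.

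The only real obstacle is the $n$-dependence of $p_n(z_j)$: the coefficient matrix is a \emph{perturbed} Vandermonde rather than an exact one, so one cannot quote the classical nonvanishing directly. Passing to the limit $m\to\infty$ (where $a_n\to1$ trivializes the perturbation) resolves this cleanly; everything else is bookkeeping, and notably the argument needs neither Theorem \ref{containment} nor the theory of symmetric functions.
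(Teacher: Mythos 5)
Your proof is correct, and its technical core coincides with the paper's, but the way you convert that core into invertibility of $A$ is genuinely different and somewhat more direct. The paper proves an auxiliary (unlabeled) lemma just before Proposition \ref{independence}: the $J\times J$ matrix $B_n=(f_{n+i}(z_j))_{i,j}$ is invertible for some $n$, shown by factoring $f_n(z_j)=(1-a_n)\,\phi_j(a_nz_j)\,z_j^{\,n}$ with $\phi_j(z)=\prod_{k\ne j}(1-w_kz)$ and letting $n\to\infty$, where the matrix degenerates to a Vandermonde matrix times an invertible diagonal. Your $N^{(m)}$ is exactly $B_m$ with the diagonal factor $\mathrm{diag}(1-a_{m+k})$ stripped off, and your perturbed-Vandermonde limit is the same computation, so that part is shared. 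The divergence is in the second half. The paper writes $\langle A\vec v,\vec v\rangle=\lVert\sum_k v_kK(\cdot,z_k)\rVert^2$ and then uses the invertibility of $B_n$ to build interpolating functions $g_j\in H(K)$ with $g_j(z_k)=\delta_{jk}$, pairing them against $\sum_kv_kK(\cdot,z_k)=0$ to conclude $v_j=0$. You instead read $A$ as the Gram matrix of the coordinate sequences $u_j=(f_n(z_j))_{n}\in\ell^2$ (legitimate by Proposition \ref{natural domain}) and note that a dependence $\sum_j\lambda_ju_j=0$, restricted to the rows $n=m,\dots,m+J-1$, is precisely $N^{(m)}\vec\lambda=0$, so one invertible block already forces $\vec\lambda=0$. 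The two reductions are equivalent in substance, since $K(\cdot,z_k)$ has coefficient sequence $\overline{u_k}$ in the orthonormal basis $\{f_n\}$, but your route skips the construction of the $g_j$ and the RKHS pairing entirely; what the paper's formulation buys in exchange is the explicit interpolating functions, which are not needed elsewhere. As you observe, neither argument uses Theorem \ref{containment} or the symmetric-function results.
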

\begin{proposition} \label{QRecursion}
For $ j \in \{1,2, \ldots J\}$, define 
$$\mu_j = \prod_{ k \neq j} (w_j-w_k) $$
If
$$Q_n (x) = \sum_{j=1}^J \, \frac{w_j^J}{\mu_j} \phi(x /w_j)w_j^n,$$
then  $Q_0(x), Q_1(x), \ldots $ satisfy the recursion:
$$
 \sum_{i=0}^n \beta_i Q_{n-i}(x)   =  \beta_{n+1}( x^{n+1}-1)$$
\end{proposition}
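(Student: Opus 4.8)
The plan is to package the sequence $Q_0(x),Q_1(x),\ldots$ into the generating function $G(t,x)=\sum_{n=0}^{\infty}Q_n(x)\,t^n$ and to recognize the asserted recursion as a single identity of power series in $t$. Since $\phi(t)=\sum_{i=0}^{J}\beta_i t^i$ with $\beta_i=0$ for $i>J$, the coefficient of $t^n$ in the Cauchy product $\phi(t)\,G(t,x)$ is precisely $\sum_{i=0}^{n}\beta_i Q_{n-i}(x)$, the left-hand side of the recursion. A short reindexing shows that the right-hand side $\beta_{n+1}(x^{n+1}-1)$ is the coefficient of $t^n$ in $\frac{\phi(xt)-\phi(t)}{t}$. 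Thus it suffices to prove the closed-form identity
$$\phi(t)\,G(t,x)=\frac{\phi(xt)-\phi(t)}{t}.$$

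First I would compute $G(t,x)$ in closed form. Interchanging the finite sum over $j$ with the geometric series in $n$ gives
$$G(t,x)=\sum_{j=1}^{J}\frac{w_j^{J}}{\mu_j}\,\phi(x/w_j)\,\frac{1}{1-w_j t}.$$
Multiplying by $\phi(t)=\prod_{k=1}^{J}(1-w_k t)$ cancels every pole, since $\phi(t)/(1-w_j t)=\prod_{k\neq j}(1-w_k t)$, and leaves the polynomial
$$\phi(t)\,G(t,x)=\sum_{j=1}^{J}\frac{w_j^{J}}{\mu_j}\,\phi(x/w_j)\,\prod_{k\neq j}(1-w_k t),$$
which has degree at most $J-1$ in $t$. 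On the other side, the constant terms of $\phi(xt)$ and $\phi(t)$ both equal $\beta_0=1$ and hence cancel, so $\phi(xt)-\phi(t)$ is divisible by $t$ and $\frac{\phi(xt)-\phi(t)}{t}$ is likewise a polynomial of degree at most $J-1$.

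To finish, I would check that these two degree-$(J-1)$ polynomials agree at the $J$ distinct points $t=1/w_j$, which forces them to be equal. At $t=1/w_j$ every term of the sum for $\phi(t)\,G(t,x)$ except the $j$-th carries a factor $(1-w_j t)=0$, so only the $j$-th survives; evaluating $\prod_{k\neq j}(1-w_k/w_j)=\mu_j/w_j^{J-1}$ collapses it to $w_j\,\phi(x/w_j)$. On the other side, $\phi(1/w_j)=0$ because $1/w_j$ is a zero of $\phi$, so $\frac{\phi(x/w_j)-\phi(1/w_j)}{1/w_j}=w_j\,\phi(x/w_j)$ as well. Agreement at these $J$ nodes yields the generating-function identity, and reading off the coefficient of $t^n$ recovers the recursion.

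The argument is essentially a partial-fractions computation, so I do not anticipate a genuine obstacle; the one step deserving care is the bookkeeping that recasts both sides as honest polynomials of degree at most $J-1$ (clearing the poles on the left and exhibiting divisibility by $t$ on the right), since it is this observation that turns evaluation at the $J$ nodes $t=1/w_j$ into a complete argument rather than a partial check. I would also note that the hypothesis that the $w_j$ are distinct is used twice: to guarantee $\mu_j\neq 0$, and to guarantee that the $J$ evaluation nodes $1/w_j$ are themselves distinct.
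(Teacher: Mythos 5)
Your proof is correct, and it takes a genuinely different route from the paper's. The paper works coefficient-by-coefficient in the variable $x$: it expands $\sum_{i=0}^n \beta_i Q_{n-i}(x)$ in the monomial basis, invokes the Chen--Louck interpolation identity $\sum_{j} w_j^m/\mu_j = h_{m-J+1}(w_1,\ldots,w_J)$ together with the classical relation $\sum_{i=0}^m \beta_i h_{m-i}=0$, and then runs a case analysis on the coefficient index ($k=0$, $1\le k\le n$, $k=n+1$, $n+1<k\le J$, and separately $n\ge J$) to see that only the constant term and the $x^{n+1}$ term survive. You instead encode all $n$ at once in the generating function $G(t,x)$ and reduce the whole proposition to the single identity $\phi(t)G(t,x)=\bigl(\phi(xt)-\phi(t)\bigr)/t$, which you verify by observing both sides are polynomials of degree at most $J-1$ in $t$ and agree at the $J$ distinct nodes $t=1/w_j$ (your evaluations $\prod_{k\neq j}(1-w_k/w_j)=\mu_j/w_j^{J-1}$ and $\phi(1/w_j)=0$ both check out, the latter because $z_j=1/w_j$ is a root of $\phi$). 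What your approach buys is self-containedness and economy: it needs no external combinatorial input and no case analysis, since the Lagrange-interpolation step silently re-derives exactly the special case of Chen--Louck that the paper imports. What the paper's approach buys is an explicit link to symmetric function theory, which makes the role of the complete homogeneous polynomials visible. The only point worth making explicit in a write-up is that the interchange of the finite sum over $j$ with the geometric series in $n$ is legitimate (either formally in $t$ or for $|t|<1$, since $|w_j|=1$), and that the polynomial identity in $t$ is an identity of polynomials whose coefficients are themselves polynomials in $x$, so interpolation at the $J$ nodes determines both sides with $x$ carried along as a parameter.
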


\begin{theorem}\label{decomposition} For every $f \in H(K),$ there exists a $g \in H^2(\mathbb{D})$
and constants $b_1,b_2, \ldots, b_J \in \mathbb{C}, $ such that 
$$f(z)=\phi(z)g(z)+ b_1 K(z,z_1)+\cdots+ b_J K(z,z_J)  .$$
\end{theorem}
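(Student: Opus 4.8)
The plan is to realize the decomposition as an orthogonal splitting governed by the evaluation functionals at the boundary points. Since each $z_j$ lies in the natural domain $\mathcal{D}$ (Proposition \ref{natural domain}), the function $K(\cdot,z_j)=\sum_n \overline{f_n(z_j)}\,f_n$ is a genuine element of $H(K)$ and reproduces point evaluation at $z_j$. Proposition \ref{independence} shows the associated Gram matrix is invertible, so $\{K(\cdot,z_1),\dots,K(\cdot,z_J)\}$ is linearly independent, its span $\mathcal{S}$ is $J$-dimensional, and its orthogonal complement is exactly $\mathcal{M}_0:=\{F\in H(K): F(z_1)=\cdots=F(z_J)=0\}$, giving $H(K)=\mathcal{S}\oplus\mathcal{M}_0$. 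Given $f\in H(K)$, I would first solve the invertible linear system $\sum_j K(z_i,z_j)b_j=f(z_i)$ (invertibility again by Proposition \ref{independence}) so that $F:=f-\sum_j b_j K(\cdot,z_j)$ lies in $\mathcal{M}_0$. The theorem then follows if $\mathcal{M}_0\subseteq\phi H^2(\mathbb{D})$, since combined with the containment $\phi H^2(\mathbb{D})\subseteq H(K)$ of Theorem \ref{containment} this yields $f=\phi g+\sum_j b_j K(\cdot,z_j)$ with $g\in H^2(\mathbb{D})$.

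The inclusion $\phi H^2\subseteq\mathcal{M}_0$ is routine: approximating $h\in H^2$ by polynomials and using both that multiplication by $\phi$ is bounded from $H^2$ into $H(K)$ (Theorem \ref{containment}) and that evaluation at $z_j$ is bounded on $H(K)$ gives $(\phi h)(z_j)=\phi(z_j)h(z_j)=0$. The reverse inclusion is the heart of the matter. Since $H(K)\subseteq H^2(\mathbb{D})$ and $\phi(0)=\beta_0=1$, every $F\in\mathcal{M}_0$ admits a formal quotient $g=F/\phi$ whose Taylor coefficients $y_n$ satisfy $\sum_{k=0}^J \beta_k y_{n-k}=\widehat{F}_n$, where $\widehat{F}_n$ are the Taylor coefficients of $F$; everything reduces to proving $\{y_n\}\in\ell^2$. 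Expanding $1/\phi(z)=\sum_{j=1}^J (w_j^{J-1}/\mu_j)(1-w_jz)^{-1}$ — the partial-fraction coefficients underlying Proposition \ref{QRecursion} — one computes $y_n=\sum_{j=1}^J (w_j^{J-1}/\mu_j)\,w_j^n S_n^{(j)}$, where $S_n^{(j)}=\sum_{l\le n}\widehat{F}_l z_j^l$ is the $n$th partial sum of the absolutely convergent series for $F(z_j)$.

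This is exactly where the vanishing conditions are used. Because $F(z_j)=\sum_l \widehat{F}_l z_j^l=0$, the partial sum equals minus the tail, $S_n^{(j)}=-\sum_{l>n}\widehat{F}_l z_j^l$, and regrouping by the basis index rewrites this as $\sum_{m>n}\alpha_m f_m(z_j)$ up to $O(J)$ boundary terms, where $F=\sum_m \alpha_m f_m$ with $\{\alpha_m\}\in\ell^2$. The decisive estimate is $|f_m(z_j)|=|\phi(a_m z_j)|\le L\,|1-a_m|\le C/m$, valid since $\phi$ is Lipschitz near $\mathbb{T}$ and $m(1-a_m)\to p$. I would then invoke Hardy's inequality: the tail-averaging operator $\{\alpha_m\}\mapsto\big(\sum_{m>n}\alpha_m/m\big)_n$ is bounded on $\ell^2$ (its adjoint is the Hardy operator), so $\sum_n |S_n^{(j)}|^2\lesssim\|\alpha\|_2^2$, and since $|w_j|=1$ this gives $\{y_n\}\in\ell^2$, i.e. $g\in H^2$. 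The main obstacle is precisely this estimate: one must exploit both the vanishing $F(z_j)=0$ (to replace slowly converging partial sums by summable tails) and the $1/m$ decay of $f_m(z_j)$, as neither membership in $H^2$ nor vanishing alone suffices. The only remaining care-points I anticipate are the boundary-term bookkeeping and the justification that the partial sums $S_n^{(j)}$ converge to the reproducing-kernel value $F(z_j)$, both of which follow from absolute convergence via Cauchy–Schwarz.
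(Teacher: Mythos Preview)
Your argument is correct and, at its core, reaches the same endpoint as the paper's proof: both reduce the problem to showing that the tail operator $\{\alpha_m\}\mapsto\bigl(\sum_{m>n}\alpha_m\,O(1/m)\bigr)_n$ is bounded on $\ell^2$. The routes differ, however. The paper encodes the map $\{\alpha_n\}\mapsto\{g_n\}$ by a lower-triangular matrix $B_p$ built from a polynomial recursion, then uses the vanishing conditions $f(z_j)=0$ together with the combinatorial Proposition~\ref{QRecursion} (via symmetric-function identities) to subtract row vectors and convert $B_p$ into an upper-triangular matrix $\widehat{C}$ whose $(n,m)$-entry is essentially $1-a_m$; boundedness of $\widehat{C}$ is then quoted from~\cite{Adams-McGuire1}. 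You instead compute the Taylor coefficients of $g=F/\phi$ directly through the partial-fraction expansion $1/\phi(z)=\sum_j (w_j^{J-1}/\mu_j)(1-w_jz)^{-1}$, obtain $y_n$ as a finite combination of the partial sums $S_n^{(j)}$, use $F(z_j)=0$ to flip each $S_n^{(j)}$ into a tail, and invoke Hardy's inequality. Your path is more elementary---it bypasses Proposition~\ref{QRecursion} and the associated symmetric-polynomial machinery entirely---while the paper's approach fits the matricial framework developed elsewhere in the article. Two small points: your invocation of Theorem~\ref{containment} is unnecessary (the theorem as stated only requires $\mathcal{M}_0\subseteq\phi H^2$, not the reverse inclusion, and dropping it recovers the paper's remark that the result holds for all $p>0$); and the ``absolute convergence'' you cite at the end is that of $\sum_m\alpha_m f_m(z_j)$, not of $\sum_l\widehat{F}_l z_j^l$, so the passage from the Taylor partial sum $S_n^{(j)}$ to the basis-indexed tail needs the boundary-term bookkeeping you mention rather than a direct appeal to absolute convergence of the Taylor series at~$z_j$.
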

\begin{proof}   Given $f \in H(K)$, first choose $b_1,b_2, \ldots , b_J$ so that 
 $$f(z)-b_1 K(z,z_1)-b_2 K(z,z_2)- \cdots -b_J K(z,z_J) $$ 
 vanishes at $z=z_1, \ldots , z_J.$ Note this is always possible in light of Proposition \ref{independence}.
Thus,
assume, without loss of generality,  
that $f \in H(K)$ satisfies $f(z_1)=f(z_2) = \cdots =f(z_J)=0$ for $j= 1,2, \ldots , J$. Our goal now becomes to demonstrate the existence of a $g \in H^2(\mathbb{D})$ so $f=\phi g$.
\newline
As $f \in H(K),$  
there exists $\{\alpha_n \} \in \ell^{2}$  such that
$$f(z)= \sum_{n=0}^{\infty}\alpha_n f_n (z) .$$
We shall refer to such a sequence $\{ \alpha_n \}$ as permissable.
We will produce a sequence $\{g_n\} \in \ell^{2}$ such that 
$$f(z)=\phi(z)\left(\sum_{n=0}^{\infty}g_n z^n\right).$$
Expanding both expressions for $f$ and equating gives:
$$  \sum_{n=0}^{\infty} \sum_{k=0}^{J} \alpha_n a_n^k \beta_k z^k z^n =
      \sum_{n=0}^{\infty} \sum_{k=0}^{J} g_n \beta_k z^k z^n$$
Equating like powers of $z$ above leads to the 
equation
$$  \sum_{k=0}^J \alpha_{n-k}\beta_k a_{n-k}^k -g_{n-k}\beta_k = 0 \hspace{.1in} \text{for}\hspace{.1in} n=0,1,2,\ldots. $$
where any quantities with negative subscripts are treated as zero.  
Since $\beta_0=1$, this relationship can be expressed as the recursion:
$$\text{*\hspace{.2in}      }g_n=\alpha_n+\left(\sum_{j=n-J}^{n-1}\alpha_j \beta_{n-j} a_j^{n-j}  - g_j\beta_{n-j} \right).$$  

Recursion * shows that one may express $g_j$ as a linear combination, 
$$g_n= \sum_{k=0}^{n} c_{n,k} \alpha_{k},$$ 
for some constants $c_{n,k}.$ 
\par
Applying *   and equating like coefficients leads to 
$$ c_{n,n} =1, $$
$$ c_{n,k} = \beta_{n-k} a_k^{n-k} -\sum_{i=1}^{n-k} \beta_i c_{n-i,k}  \hspace{0.75 cm} n-J \leq k \leq n-1 ,$$

and for $0 \leq k \leq n-J-1,$
$$c_{n,k}= - \sum_{i=1}^{J} \beta_i c_{n-i,k}.$$
This suggests that one
let $\{ p_n : n\in \mathbb{Z}_+\}$ be the sequence of polynomials defined by the linear recursion:
$$ p_0(x) =1, $$
$$ p_1(x) =-\beta_1(1-x), $$
$$\vdots$$
$$p_n(x)= \beta_n x^n-\sum_{i=1}^{n} \beta_i p_{n-i}(x)$$
$$\vdots$$
$$p_J(x)=\beta_Jx^J-\sum_{i=1}^{J}\beta_ip_{J-i}(x)$$
and thereafter, if $n \geq J+1,$
$$\text{**\hspace{.2in}      }p_n(x)=- \sum_{i=1}^{J} \beta_i p_{n-i}(x).$$
Then
$$c_{n+k,k}=p_{n}(a_{k}) \text{\hspace{ .2in } if }n\geq 0.$$
To prove this claim, notice that it follows directly for all $k \ge 0$ if $n=0,1,\dots, J$ using induction.  The cases $n >J$ then follow from the recursion by induction.
\par
Thus the map $\{\alpha_n\} \mapsto \{g_n\}$ is encoded by the following matrix $B_p$ (that is, $\{g_n\}_{n=0}^{\infty}=B_p \{\alpha_n\}_{n=0}^{\infty}$) where
$$B_p=\begin{pmatrix}
1&0& 0 & 0 & 0 & 0& \dots\\
p_1(a_0)&1& 0 & 0 & 0 & 0& \dots\\
p_2(a_0) &p_1(a_1) & 1&0  & 0 & 0  &\ddots\\
p_3(a_0) &p_2(a_1) & p_1(a_2)& 1&0 & 0 & \ddots\\
p_4(a_0) & p_3(a_1) & p_2(a_2) & p_1(a_3) & 1&0 &  \ddots\\
\vdots & \ddots & \ddots& \ddots & \ddots & \ddots & \ddots
\end{pmatrix}$$
\par
If the matrix $B_p$ were bounded as an operator, then the desired result would follow immediately.  
However, the columns of $B_p$ are not in $\ell^2$.  We will use the assumption that $f(z_j)=0$ for $j=1,2, \dots, J$, to find an 
equivalent encoding of the map $\{\alpha_n\} \mapsto \{g_n\}$ which is bounded.  
\par
To find this alternate encoding of $B_p$, begin by considering the vector  
$$\vec{v}_n=\begin{pmatrix}
p_n(a_0) & p_{n-1}(a_1) & \cdots&p_2(a_{n-2}) & p_1(a_{n-1}) & 1&0 &  \cdots\\
\end{pmatrix}$$
which equals the n'th row of $B_p$. 
Let $z_j$ be a root of $\phi$.  The fact that $f(z_j)=0$ is equivalent to
the equation $\sum_{n=0}^{\infty}\alpha_n \phi(a_n z_j)z_j^n=0$ which in turn means that the 
vector
$$\vec{w}_j=\begin{pmatrix}
\phi(a_0 z_j) & \phi(a_1 z_j) z_j& \phi(a_2 z_j)z_j^2 & \phi(a_3 z_j)z_j^3   &  \cdots\\
\end{pmatrix} \quad \text{for }  j \in \{ 1,2, \ldots J\}.$$
is orthogonal to any permissible $\vec{\alpha}=(\alpha_n)_{n=0}^\infty.$
\par
Let $q_{j,n}(x)=\phi(x z_j)z_j^{-n}$  for $n \in \mathbb{Z}_+$.
Then the polynomial sequence $\{ q_{j,n} :n\in \mathbb{Z} \}$  satisfies condition ** satisfied by 
$\{ p_n :n\in \mathbb{Z}_+ \}$. (This follows directly from the fact that $z_j$ is a root of $\phi$.)
Moreover, the vector 
$$\vec{u}_j=\begin{pmatrix}
q_{j,n}(a_0 ) &q_{j,n-1}(a_1 ) &&\ldots&q_{j,1}(a_{n-1} )  &q_{j,0}(a_n ) & q_{j,-1}(a_{n+1} )  &  \cdots\\
\end{pmatrix} $$
equals $w_j^n \vec{w}_j$ and thus is orthogonal to all permissible sequences.
\par
Therefore, the $n$th row $\vec{v}_n$ of $B_p$ can be replaced by $\vec{v}_n$ less any linear combination of 
the vectors $\vec{u}_1, \vec{u}_2, \ldots \vec{u}_J$ without changing the action on permissible vectors.
Proposition \ref{QRecursion} shows that subtracting $\vec{v'}_n =\left( Q_{n-1}(a_0), Q_{n-2}(a_1)), Q_{n-3}(a_2), \ldots \right)$ from $\vec{v}_n$
zeroes out the first $n$ entries.  Thus, an equivalent encoding of $B_p$ is given by the matrix
$${C}=\begin{pmatrix}
1-Q_{-1}(a_0) &-Q_{-2}(a_1) & -Q_{-3}(a_2)&-Q_{-4}(a_3)&  \dots\\
0 & 1-Q_{-1}(a_1)  &  -Q_{-2}(a_2) &  -Q_{-3}(a_3) \dots\\
0 & 0 &  1-Q_{-1}(a_2)  & -Q_{-2}(a_3) \ddots\\
0 &0 & 0  & 1-Q_{-1}(a_3)   &  \ddots\\
\cdots & \cdots & \cdots & \ddots & \ddots 
\end{pmatrix}.$$

Since $w_1, w_2, \ldots , w_J$ are discrete points on the unit circle, it is a straightforward exercise to show that there exists a constant $c$, independent of $m$ and $n$,  such that
$|Q_n(a_m)| \le c(1-a_m) .$
\par
Thus the map $\{\alpha_j\} \mapsto \{g_j\}$ is bounded if the matrix $\widehat{C}$ is bounded where
$$\widehat{C}=\begin{pmatrix}
1-a_0 & 1-a_1 &  1-a_2 &  \dots\\
0 & 1-a_1 & 1-a_2  & \ddots\\
0 &0 & 1-a_3 &  \ddots\\
\cdots & \cdots & \ddots & \ddots 
\end{pmatrix}.$$

But this matrix is known to be bounded since the entries behave asymptotically like $\frac{p}{n}$ (see Theorem 2.2 in \cite{Adams-McGuire1}), establishing the result.
\end{proof}

\begin{remark} Note that the preceding result is independent of $p$ (it holds for all $p>0$). Compare this to Theorem \ref{containment}.
\end{remark}

\begin{remark} Note that the proof of the preceding theorem demonstrates that if we had taken $a_js$ with a slower convergence rate, we would not have obtained a bounded matrix for $\hat{C}$. In particular, suppose that $a_j=1-\left(\frac{1}{j+2}\right)^p$ where $p<1/2 $. Then we would obtain 

$$\widehat{C}=\begin{pmatrix}
\frac{1}{2^p} & \frac{1}{3^p} &  \frac{1}{4^p}&  \dots\\
0 & \frac{1}{3^p} & \frac{1}{4^p}  & \ddots\\
0 &0 & \frac{1}{4^p} &  \ddots\\
\cdots & \cdots & \ddots & \ddots 
\end{pmatrix}.$$

This matrix is easily seen to be unbounded ( in particular the $\ell^2$ norms of its columns approach $\infty$), which suggests (but does not prove) that we might not obtain the result of the theorem in this case. Together with Example \ref{bad example}, this helps justify the consideration of spaces with the specific growth rate given in the hypothesis of the theorem. 
\end{remark}

Theorem \ref{decomposition} admits the following corollary, completing our characterization of these spaces when $p>\frac{1}{2}$ and $\lim_{n \rightarrow \infty} n(1-a_n)=p$:

\begin{corollary}
If $p>1/2$ and $\lim_{n \rightarrow \infty} n(1-a_n)=p$, then $$H(K)= \phi(z) H^2(\mathbb{D})+\mathbb{C} K(z,{z}_1)+\mathbb{C} K(z,{z}_2) + \cdots  +\mathbb{C} K(z,{z}_J).$$
\end{corollary}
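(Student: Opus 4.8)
The plan is to establish the corollary as a two-sided inclusion, with essentially all of the substantive content inherited from the results already proved. Write $\mathcal{S} = \phi(z)H^2(\mathbb{D}) + \mathbb{C}K(z,z_1) + \cdots + \mathbb{C}K(z,z_J)$ for the right-hand side. Since $\mathcal{S}$ and $H(K)$ are both subspaces of the space of analytic functions on $\mathcal{D}$, it suffices to prove $\mathcal{S} \subseteq H(K)$ and $H(K) \subseteq \mathcal{S}$.

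For the inclusion $\mathcal{S} \subseteq H(K)$, I would argue that each of the summands defining $\mathcal{S}$ already lies in $H(K)$ and then invoke the fact that $H(K)$ is a vector space to conclude that their algebraic sum does too. The first summand $\phi(z)H^2(\mathbb{D})$ is contained in $H(K)$ precisely by Theorem \ref{containment}, where the hypotheses $p>1/2$ and $\lim_{n \rightarrow \infty} n(1-a_n)=p$ are used. For the remaining summands, I would observe that by Proposition \ref{natural domain} each $z_j$ lies in the natural domain $\mathcal{D}$, so that $\sum_{n} |f_n(z_j)|^2 < \infty$; by the Cauchy-Schwarz inequality this makes evaluation at $z_j$ a bounded linear functional on $H(K)$, whence the series $\sum_{n} \overline{f_n(z_j)}\, f_n$ converges in the norm of $H(K)$ to the element reproducing point evaluation at $z_j$, namely $K(\cdot,z_j)$. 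Thus each $K(\cdot,z_j) \in H(K)$, and the inclusion follows.

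The reverse inclusion $H(K) \subseteq \mathcal{S}$ is exactly the assertion of Theorem \ref{decomposition}: every $f \in H(K)$ admits a representation $f = \phi g + b_1 K(\cdot,z_1) + \cdots + b_J K(\cdot,z_J)$ with $g \in H^2(\mathbb{D})$ and $b_1,\ldots,b_J \in \mathbb{C}$. Combining the two inclusions yields the claimed equality. I do not anticipate a serious obstacle, since the two main theorems do all the heavy lifting; the only point requiring a short independent argument is the membership $K(\cdot,z_j) \in H(K)$, and I expect this to be the main (and quite minor) step, reducing as it does to the summability estimate $\sum_{n} |f_n(z_j)|^2 \lesssim \sum_{n} |1-a_n|^2 < \infty$ already recorded in Proposition \ref{natural domain}.
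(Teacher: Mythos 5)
Your proposal is correct and follows the same route the paper intends: the corollary is stated without an explicit proof precisely because it is the conjunction of Theorem \ref{containment} (giving $\phi(z)H^2(\mathbb{D})\subset H(K)$), Theorem \ref{decomposition} (giving the reverse inclusion), and the membership of the kernel functions $K(\cdot,z_j)$ in $H(K)$ via Proposition \ref{natural domain}. Your brief justification of that last point is exactly the intended one.
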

\section[Proof of combinatorial propositions]{Proof of combinatorial propositions}

\begin{lemma}
If $f_n(z)=\phi(a_n z)z_j^n$ is the $n$th basis vector for $H(K)$,
then for some $n$, 
the matrix 
$$B_n=
\begin{pmatrix} 
 f_n(z_1) &  f_n(z_2) & \cdots &  f_n(z_J) \\
 f_{n+1}(z_1) &   f_{n+1}(z_2) & \cdots &   f_{n+1}(z_J) \\
\vdots&\vdots&\vdots&\vdots\\
 f_{n+J-1}(z_1) &   f_{n+J-1}(z_2) & \cdots &   f_{n+J-1}(z_J) \\
\end{pmatrix}$$
is invertible.
\end{lemma}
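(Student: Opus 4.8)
The plan is to reduce the invertibility of $B_n$ to a Vandermonde determinant computation by peeling scalar factors off the rows and columns, and then to run a continuity argument as $n \to \infty$. The key structural observation is that because $z_j \in \mathbb{T}$ and $w_j = \overline{z_j}$, we have $w_j z_j = |z_j|^2 = 1$, so the $k=j$ factor in $\phi(a_m z_j) = \prod_{k=1}^J(1 - w_k a_m z_j)$ collapses to $1 - a_m$. Writing $g_j(a) = \prod_{k \neq j}(1 - w_k a z_j)$, this yields the clean factorization
$$ f_m(z_j) = \phi(a_m z_j)\, z_j^m = (1 - a_m)\, g_j(a_m)\, z_j^m .$$

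First I would substitute $m = n + i - 1$ into the $(i,j)$ entry of $B_n$ and extract the scalar $z_j^n$ from the $j$th column and the scalar $(1 - a_{n+i-1})$ from the $i$th row. Since each $z_j$ lies on $\mathbb{T}$ and the sequence $\{a_m\}$ is non-vanishing (so $1 - a_m \neq 0$), these prefactors are all nonzero, and hence $B_n$ is invertible if and only if the reduced matrix $R_n = \big[\, g_j(a_{n+i-1})\, z_j^{\,i-1}\,\big]_{i,j=1}^J$ is invertible.

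Next I would let $n \to \infty$. Because $a_m \to 1$, each entry $g_j(a_{n+i-1})$ converges to $g_j(1) = \prod_{k \neq j}(1 - w_k z_j)$, which is nonzero precisely because the $z_j$ are distinct (equivalently, $g_j(1) = -z_j\,\phi'(z_j) \neq 0$). Thus $R_n$ converges entrywise to $R_\infty = [\, g_j(1)\, z_j^{\,i-1}\,]$. Factoring $g_j(1)$ out of the $j$th column leaves the matrix $[\, z_j^{\,i-1}\,]_{i,j}$, which is exactly the Vandermonde matrix in the distinct nodes $z_1, \ldots, z_J$, with determinant $\prod_{1 \le j < k \le J}(z_k - z_j) \neq 0$. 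Hence $\det R_\infty = \big(\prod_{j=1}^J g_j(1)\big)\prod_{j<k}(z_k - z_j) \neq 0$.

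Finally, since the determinant is a continuous (indeed polynomial) function of the entries and $\det R_n \to \det R_\infty \neq 0$, there exists $N$ such that $\det R_n \neq 0$ for all $n \ge N$; for any such $n$ the reduced matrix $R_n$, and therefore $B_n$, is invertible. This proves the claim, and in fact gives invertibility not merely for some $n$ but for all sufficiently large $n$. The only points requiring care are verifying the collapse $1 - w_j a_m z_j = 1 - a_m$ and that $g_j(1) \neq 0$; both are immediate once the identity $w_j z_j = 1$ is in hand, so I anticipate no serious obstacle, the argument being essentially a continuity perturbation of a Vandermonde determinant.
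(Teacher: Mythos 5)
Your proof is correct and follows essentially the same route as the paper's: both factor $f_{n+i-1}(z_j)=(1-a_{n+i-1})\,\phi_j(a_{n+i-1}z_j)\,z_j^{n+i-1}$ using $w_jz_j=1$, strip off the nonzero diagonal row and column factors, and identify the entrywise limit of the reduced matrix as a Vandermonde matrix times an invertible diagonal matrix, concluding by continuity (openness of the invertible matrices). Your version is slightly more explicit about why $\phi_j(z_j)\neq 0$ and notes the stronger conclusion that invertibility holds for all sufficiently large $n$, but the argument is the same.
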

\begin{proof}
Define $\phi_j(z) =\prod_{k \ne j} (1-w_k z)$ and 
notice that $f_n(z_j)=\phi_j(a_n z_j)z_j^n(1-a_n ).$
Notice that $B_n$ can be written as the product  $B_n=D_1 C_n D_2$ where $D_1$ is the diagonal matrix with entries 
$1-a_n,1-a_{n+1}, \ldots 1-a_{n+J-1}$ and $D_2$ is the diagonal matrix with entries $z_1^{n+1}, z_2^{n+1}, \ldots z_J^{n+1}$.
Thus, 
$$C_n=\left( \phi_j(a_{n+i}z_j)z_j^{i-1}  \right) _{i,j=1}^J$$
Notice that the component-wise limit of $C_n$ as $n \rightarrow \infty$
is 
$$C_\infty=\left( \phi_j(z_j)z_j^{i-1}  \right) _{i,j=1}^J,$$
which is the matrix product of the Vandermonde matrix  
$V=\left(z_j^{i-1}  \right) _{i,j=1}^J$ with the diagonal matrix
$D_3$ with entries $ \phi_1(z_1),  \phi_2(z_2), \ldots ,  \phi_J(z_J)$.
Since these matrices are invertible, so too is $C_\infty$.  
Since the invertible matrices form an open set set in $\mathbb{C}^{J^2}$,
$C_n$ must be invertible for some $n.$
\end{proof}
\begin{proof}[\bf Proof of Proposition \ref{independence}]
Suppose that $ A\vec{v} =\vec{0}$ for some $\vec{v} \in \mathbb{C}^J$.  Then
$$0 = \langle A \vec{v} , \vec{v} \rangle =|| \sum_{k=1}^J v_k K(z, z_k) ||^2$$
But, this implies that $\sum_{k=1}^J v_k K(z, z_k) =0$.
\par
Use the preceding lemma to find $J$ elements $g_1, g_2, \ldots, g_J$ of $H(K)$ with the property that
$g_j(z_k)= 0$, if  $k\ne j$ and $g_j(z_j)=1$.
Thus, 
$$\bar{v}_j =\sum_{k=1}^J \langle g_j(z) ,  v_k K(z, z_k) \rangle= \langle g_j(z) , \sum_{k=1}^J v_k K(z, z_k) \rangle =  \langle g_j(z) ,0 \rangle =0.$$
In other words, $A$ has trivial kernel, so must be invertible.
\end{proof}

The following two theorems from combinatorics provide the necessary tools to prove Proposition \ref{QRecursion}. Theorem \ref{Louck} appears in \cite{Louck} while Theorem \ref{homoSum} is a well-known result in combinatorics.
\begin{theorem}\label{Louck}  [See \cite{Louck} Theorem 2.2.]
For each integer $m \ge 0$,
$$\sum_{j=1}^J\, x_j^m/\mu_j  = h_{m-J+1}(x_1,x_2, \ldots, x_J),$$
where $h_k$ is the $k$'th homogeneous symmetric polynomial, which is defined to be zero for $k<0$.
\end{theorem}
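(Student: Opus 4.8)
The plan is to prove this classical identity by a single residue (equivalently, generating-function) computation, working under the standing assumption that $x_1, \ldots, x_J$ are distinct, so that $\mu_j = \prod_{k \neq j}(x_j - x_k)$ is nonzero. The starting point is the elementary partial-fraction decomposition
$$\frac{1}{\prod_{j=1}^J (y - x_j)} = \sum_{j=1}^J \frac{1}{\mu_j \, (y - x_j)},$$
whose correctness is immediate upon clearing denominators and evaluating at $y = x_j$. I would then multiply both sides by $y^m$ and compare the two resulting expressions as Laurent series in $1/y$ on the annulus $|y| > \max_j |x_j|$, where all the geometric expansions below converge and where the Laurent expansion of a rational function is unique.

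On the sum side, expanding each summand geometrically gives
$$\frac{y^m}{\mu_j (y - x_j)} = \sum_{r=0}^\infty \frac{x_j^r}{\mu_j}\, y^{\,m - r - 1},$$
so that, after summing over $j$, the coefficient of $y^{-1}$ is exactly $\sum_{j=1}^J x_j^m/\mu_j$ (taking $r = m$). On the product side I would instead factor out $y^J$ and invoke the generating function for the complete homogeneous symmetric polynomials,
$$\frac{y^m}{\prod_{j=1}^J(y - x_j)} = y^{\,m - J}\prod_{j=1}^J \frac{1}{1 - x_j/y} = \sum_{s=0}^\infty h_s(x_1, \ldots, x_J)\, y^{\,m - J - s}.$$
Here the coefficient of $y^{-1}$ is $h_{m-J+1}$, obtained by setting $s = m - J + 1$. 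Since both displays are Laurent expansions of the same rational function in the same annulus, their $y^{-1}$ coefficients must agree, which is precisely the asserted identity $\sum_j x_j^m/\mu_j = h_{m-J+1}$.

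There is essentially no deep obstacle here; the result is standard (it also follows by recognizing the left side as the $(J-1)$st divided difference of $y \mapsto y^m$ at the nodes $x_1, \ldots, x_J$, or as the sum of the finite residues of $y^m/\prod_j(y-x_j)$, which equals minus its residue at infinity). The only points requiring care are bookkeeping ones: confirming that both expansions are valid in a common region so that matching a single Laurent coefficient is legitimate, and tracking the convention $h_k = 0$ for $k < 0$. In particular, when $m < J - 1$ the exponent $s = m - J + 1$ is negative and simply does not occur in the second expansion, so the coefficient of $y^{-1}$ vanishes, in agreement with the vanishing of $h_{m-J+1}$ in that range.
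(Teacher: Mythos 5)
Your argument is correct. Note first that the paper itself does not prove this statement: it is quoted as Theorem 2.2 of Chen--Louck and used as a black box, so there is nothing internal to compare against; you have supplied a self-contained proof where the authors rely on a citation. Your route --- the partial-fraction identity $1/\prod_j(y-x_j)=\sum_j 1/(\mu_j(y-x_j))$, multiplication by $y^m$, and matching the coefficient of $y^{-1}$ in the two Laurent expansions on $|y|>\max_j|x_j|$ against the generating function $\prod_j(1-x_j/y)^{-1}=\sum_{s\ge 0}h_s\,y^{-s}$ --- is sound at every step: the expansions converge absolutely in a common annulus, the Laurent expansion of a rational function there is unique, and your bookkeeping correctly handles the degenerate range $m<J-1$, where $s=m-J+1$ is not attained and both sides vanish, consistent with the convention $h_k=0$ for $k<0$. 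The standing assumption that the $x_j$ are distinct is exactly what the paper needs, since the $w_j$ there are distinct points on the circle, so $\mu_j\neq 0$. The one stylistic gain of your approach over the bare citation is that it makes transparent why the shift by $J-1$ appears (it is the difference in degree between numerator and denominator), and your closing remark identifying the left side as a divided difference, or as the negative of the residue at infinity of $y^m/\prod_j(y-x_j)$, gives two further one-line verifications of the same fact.
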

\begin{theorem}\label{homoSum} 
For each integer $m > 0$,
$$\sum_{i=0}^m\, \beta_i  h_{m-i}(x_1,x_2, \ldots, x_J)=0.$$

\end{theorem}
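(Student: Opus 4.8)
The plan is to prove this as the classical duality between the elementary and complete homogeneous symmetric polynomials, carried out cleanly with generating functions. First I would pin down what the coefficients $\beta_i$ are in terms of the variables $x_1, \ldots, x_J$. Since the global definition in the paper is $\phi(z)=\prod_{j=1}^J(1-w_jz)=\sum_{k=0}^J\beta_k z^k$, identifying $x_j$ with $w_j$ gives $\sum_{i=0}^J \beta_i t^i=\prod_{j=1}^J(1-x_j t)$, so that $\beta_i=(-1)^i e_i(x_1,\ldots,x_J)$, where $e_i$ is the $i$th elementary symmetric polynomial, and $\beta_i=0$ for $i>J$ in accordance with the paper's convention.

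Next I would recall the standard generating function for the complete homogeneous symmetric polynomials, namely
$$\sum_{k=0}^{\infty} h_k(x_1,\ldots,x_J)\, t^k=\prod_{j=1}^J \frac{1}{1-x_j t},$$
which follows at once by expanding each factor $\tfrac{1}{1-x_j t}$ as a geometric series and collecting monomials of each total degree $k$.

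The crux is then the formal power series identity
$$\left(\sum_{i=0}^J \beta_i t^i\right)\!\left(\sum_{k=0}^{\infty} h_k t^k\right)=\prod_{j=1}^J (1-x_j t)\cdot \prod_{j=1}^J \frac{1}{1-x_j t}=1.$$
Expanding the product on the left and collecting the coefficient of $t^m$ yields $\sum_{i=0}^m \beta_i h_{m-i}$, which must match the coefficient of $t^m$ on the right-hand side. Since that coefficient is $0$ for every $m>0$ (and $1$ for $m=0$), the desired identity follows immediately.

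The argument is entirely routine once the two generating functions are in place, so there is no substantive obstacle. The only point requiring care is the bookkeeping of signs in the identification $\beta_i=(-1)^i e_i$, together with the observation that the two products telescope to $1$ as \emph{formal} power series; this automatically respects the truncation $\beta_i=0$ for $i>J$ and sidesteps any convergence concerns about the variables $x_j$.
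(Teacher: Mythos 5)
Your proof is correct. The paper actually omits the proof of this theorem entirely, citing it as a well-known fact about symmetric polynomials; your generating-function argument (the coefficient-of-$t^m$ extraction from $\prod_j(1-x_jt)\cdot\prod_j(1-x_jt)^{-1}=1$, with $\beta_i=(-1)^ie_i$) is precisely the standard proof of that fact, and your handling of the sign convention and the truncation $\beta_i=0$ for $i>J$ is consistent with the paper's usage.
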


Theorem \ref{homoSum} is a well-known result in the field of symmetric polynomials and we omit its proof. Now we are in a position to prove Proposition \ref{QRecursion}:

\begin{proof}[\bf Proof of Proposition \ref{QRecursion}]
First assume $0 \le n <J$, and write
$$ \sum_{i=0}^n \beta_i Q_{n-i}(x) = \sum_{k=0}^J a_k x^k.$$
Then
\begin{eqnarray*}
 \sum_{i=0}^n \beta_i Q_{n-i}(x)  & = & \sum_{i=0}^n \beta_i \sum_{j=1}^J \, \frac{w_j^J}{\mu_j} \phi(x /w_j)w_j^{n-i}\\
 &=& \sum_{i=0}^n \beta_i \sum_{j=1}^J \, \sum_{k=0}^J \frac{w_j^J}{\mu_j} \beta_k\left( \frac{x}{w_j}\right)^k w_j^{n-i}\\
  &=& \sum_{k=0}^J  \beta_k x^k \sum_{i=0}^n \beta_i \sum_{j=1}^J \, \frac{w_j^{J+n-i-k}
  }{\mu_j}  \\
    &=& \sum_{k=0}^J  \beta_k x^k \sum_{i=0}^n \beta_i h_{n-k-i+1}(w_1, \ldots , w_J)\\
 \\
\end{eqnarray*} 
Thus, 
\begin{eqnarray*}
a_0 & = & \beta_0  \sum_{i=0}^n \beta_i h_{n-i+1}(w_1, \ldots , w_J).\\
\end{eqnarray*} 
Now $\beta_0=1$ and from Theorem 2, $ \sum_{i=0}^{n+1} \beta_i h_{n-i+1}(w_1, \ldots , w_J)=0$.  Thus, $a_0=-\beta_{n+1}.$
\par
Now suppose $1\le k \le n$.  Then
\begin{eqnarray*}
a_k & = & \beta_k  \sum_{i=0}^n \beta_i h_{n-k-i+1}(w_1, \ldots , w_J)\\
&= & \beta_k  \sum_{i=0}^{n-k+1} \beta_i h_{n-k-i+1}(w_1, \ldots , w_J)\\
&= &0.\\
\end{eqnarray*} 
For $k=n+1$,
\begin{eqnarray*}
a_{n+1} & = & \beta_{n+1}  \sum_{i=0}^n \beta_i h_{-i}(w_1, \ldots , w_J)=\beta_{n+1}\\
\end{eqnarray*} 
since only the first term in the sum is non-zero.
\par
If $n+1<k<J$,  then $n-k-i+1$ is always negative for $i\ge 0$ so 
\begin{eqnarray*}
a_{k} & = & \beta_{k}  \sum_{i=0}^n \beta_i h_{n+1-k-i}(w_1, \ldots , w_J)=0.\\
\end{eqnarray*} 
This shows that recursion * holds for $0\le n < J$.  
\newline
Now, suppose $n \ge J.$
Then,
\begin{eqnarray*}
 \sum_{i=0}^n \beta_i Q_{n-i}(x)  &=& \sum_{k=0}^J  \beta_k x^k \sum_{i=0}^n \beta_i h_{n-k-i+1}(x_1, \ldots , x_J)\\
 \\
\end{eqnarray*} 
Since $n \ge J$, and $\beta_j = 0$ for $j >J$, Theorem 2 applies to show that the sum
$ \sum_{i=0}^n \beta_i Q_{n-i}(x) $ equals zero.
\end{proof}

\section{Some Additional Consequences}

Consider next the natural question of whether $H(K)$ is closed under multiplication by the independent variable $z$. We have the following result:

\begin{theorem}\label{zMultiplies}
If $p > \frac{1}{2}$ and $\lim_{n \rightarrow \infty}n(1-a_n)=p$, then $z$ is a multiplier on $H(K).$
 \end{theorem}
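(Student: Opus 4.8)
The plan is to exploit the decomposition $H(K) = \phi(z) H^2(\mathbb{D}) + \mathbb{C} K(z,z_1) + \cdots + \mathbb{C} K(z,z_J)$ established in Theorem \ref{decomposition} and its corollary, reducing the boundedness of $M_z$ on $H(K)$ to understanding how $M_z$ acts on each of the $J+1$ summands. Since $H(K)$ is finite-codimensional over the closed subspace $\phi(z) H^2(\mathbb{D})$ (it adds only the $J$-dimensional span of the reproducing kernels $K(z,z_j)$), it suffices to show that $zf \in H(K)$ for every $f$ in each summand, and then invoke the closed graph theorem to upgrade ``$M_z$ maps $H(K)$ into $H(K)$'' to ``$M_z$ is bounded.'' Concretely, I would first verify that $M_z$ maps $\phi(z) H^2(\mathbb{D})$ into $H(K)$: for $f = \phi g$ with $g \in H^2(\mathbb{D})$, we have $zf = \phi (zg)$, and since $zg \in H^2(\mathbb{D})$ (multiplication by $z$ is the unilateral shift on the Hardy space, hence an isometry), Theorem \ref{containment} gives $zf \in \phi(z) H^2(\mathbb{D}) \subset H(K)$.

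The substantive step is to show that $z K(z,z_j) \in H(K)$ for each $j \in \{1,\dots,J\}$. I would compute $z K(z,z_j)$ explicitly using $K(z,w) = \sum_{n=0}^\infty f_n(z)\overline{f_n(w)}$, so that
$$
z K(z,z_j) = \sum_{n=0}^\infty \overline{f_n(z_j)}\, z f_n(z) = \sum_{n=0}^\infty \overline{f_n(z_j)}\, z^{n+1}\phi(a_n z).
$$
The obstacle is that $z f_n(z) = z^{n+1}\phi(a_n z)$ is \emph{not} a scalar multiple of the basis vector $f_{n+1}(z) = z^{n+1}\phi(a_{n+1} z)$, because $a_n \ne a_{n+1}$. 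The natural remedy is to write
$$
z f_n(z) = f_{n+1}(z) + z^{n+1}\bigl(\phi(a_n z) - \phi(a_{n+1} z)\bigr),
$$
and to control the correction term. Since the coefficients of $\phi(a_n z)-\phi(a_{n+1}z)$ are $\beta_k(a_n^k - a_{n+1}^k)$, and $a_n^k - a_{n+1}^k = O(|a_n - a_{n+1}|) = O(1/n^2)$ under the hypothesis $\lim_n n(1-a_n)=p$, the correction is a polynomial perturbation of size $O(1/n^2)$; summed against the weights $\overline{f_n(z_j)} = O(1-a_n) = O(1/n)$ this should be controllable.

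To make this rigorous I would use the matrix/Douglas-Lemma framework already developed. Let $S$ denote the matrix of $M_z$ in Taylor coordinates (the forward shift on coefficients), let $L$ be the lower-triangular matrix whose columns are the Taylor coefficients of the $f_n$, and recall from the Preliminaries that $H(K)$ is identified with the range of $L$. By Douglas' Range Inclusion Lemma, $M_z$ maps $H(K) = \operatorname{ran} L$ boundedly into itself precisely when there is a bounded matrix $T$ with $SL = LT$; equivalently, each column of $SL$ — which is just the coefficient vector of $z f_n$ — must lie in $\operatorname{ran} L$ with uniformly square-summable coefficients. Solving $SL = LT$ column by column yields a triangular recursion for $T$ entirely analogous to the recursion for $C$ in Theorem \ref{big theorem}, governed by the \emph{same} family of recursion matrices $M_n$ and the same limiting matrix $M_\infty$. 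The key point is that the inhomogeneous data driving this recursion are again of size $O(1-a_n) = O(1/n)$, exactly as for the starting vectors $\vec{v}_{n+J,n}$ in Theorem \ref{containment}. I would therefore verify that the starting-vector growth estimate $\|\vec{v}_{n+J,n}\| \le D_1 \frac{p}{n+J}$ of Theorem \ref{big theorem} holds in this setting (by an induction identical in structure to the one in the proof of Theorem \ref{containment}, using that the column data of $SL$ differ from those producing $C$ only by terms carrying an explicit factor of $1-a_n$), and then apply Lemma \ref{big lemma} together with Theorem \ref{big theorem} verbatim to conclude that $T$ is bounded. The decay hypothesis $p>1/2$ enters exactly as before, through Theorem \ref{matrix theorem}, which is precisely why the result is stated under that restriction.

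The main obstacle, as indicated, is organizing the correction term so that the recursion for $T$ matches the hypotheses of Theorem \ref{big theorem}; once the inhomogeneity is seen to carry a uniform factor of $1-a_n$, the matrix-recursion machinery of Section 2 applies with essentially no change, and the boundedness of $T$ — hence of $M_z$ on $H(K)$ — follows.
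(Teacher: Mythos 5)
Your final argument is essentially the paper's own proof: both compute the matrix of $M_z$ with respect to the orthonormal basis $\{f_n\}$ (equivalently, solve $SL=LT$ via Douglas' Lemma), observe that its columns satisfy the same recursion governed by the matrices $M_n$ with starting data of size $O(1-a_n)=O(1/n)$, and invoke Lemma \ref{big lemma} together with Theorem \ref{big theorem} to conclude boundedness. The opening detour through the decomposition of Theorem \ref{decomposition} and the unneeded (and, from $\lim_n n(1-a_n)=p$ alone, unjustified) estimate $a_n-a_{n+1}=O(1/n^2)$ are scaffolding you rightly abandon; the only detail worth making explicit is that the unit subdiagonal $c_{n+1,n}=1$ must be split off as a bounded shift before the decay estimate on the starting vectors applies, as the paper notes parenthetically.
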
 

\begin{proof} It is sufficient to show that the matrix representation of $M_z$ with respect to the orthonormal basis 
$\{ f_n:n\in\mathbb{N}\}$ is bounded as a matrix. Denote this matrix as $C=(c_{k,n})$. Thus 
$$M_z(f_n)=\sum_{k=0}^{\infty}c_{k,n}f_k$$ with the coefficients $c_{k,n}$ yet to be determined.  Expanding the sum and rearranging as powers of $z$ shows that $c_{k,n}=0$ for $k\leq n$ and leads to the recursion:
\begin{eqnarray*}
c_{n+1,n} & = & 1\\
c_{n+j+1,n} & = & \beta_j a_n^j-\sum_{i=1}^{j} \beta_{i} a_{n+j+1-i}^{i}c_{n+j+1-i,n} \quad \text{if}\quad 0 \leq j \leq J\\
c_{n+J+k+1,n} & = & -\sum_{i=1}^{J} \beta_i a_{n+J+k+1-i}^i c_{n+J+k+1-i,n} \quad \text{if}\quad 1 \leq k \\
\end{eqnarray*}
Notice that for $k \ge 1$, this is precisely the same recursion encoded by $M_n$ and  Theorem \ref{containment}
applies to demonstrate the boundedness of $C$ (as before, it is straightforward to show the starting vectors have the appropriate decay and we omit the details, just note that the diagonal of $1$s can be removed without affecting the boundedness of $C$).
\end{proof}

Thus, in addition to establishing that the multiplier algebra of $H(K)$ contains the polynomials, we get the following nice result:

\begin{corollary}
 Let $H(K)$ denote the reproducing kernel Hilbert space with orthonormal basis $$f_{n}(z)=\phi(a_nz)z^n.$$   If $p>1/2$ and $\lim_{n \rightarrow \infty} n(1-a_n)=p$, then $H(K)$ contains the polynomials.
\end{corollary}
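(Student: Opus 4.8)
The plan is to combine the fact that $z$ is a multiplier (Theorem \ref{zMultiplies}) with the elementary structure of ideals in the polynomial ring $\mathbb{C}[z]$. The key observation is that the collection
$$P = \{\, q \in \mathbb{C}[z] : q \in H(K) \,\}$$
of polynomials lying in $H(K)$ is an \emph{ideal} of $\mathbb{C}[z]$, not merely a linear subspace. Once this is known, the problem reduces to identifying the monic generator of $P$ and showing it is constant.

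First I would verify the ideal property. $P$ is clearly closed under addition and scalar multiplication because $H(K)$ is a vector space. For the absorption property, recall that by Theorem \ref{zMultiplies} multiplication by $z$ is bounded on $H(K)$; since the bounded multipliers form an algebra, $M_p = p(M_z)$ is bounded for every polynomial $p$, and $M_p g = p\,g$. Hence if $q \in P$ and $p$ is any polynomial, then $p q = M_p q \in H(K)$ is again a polynomial, so $pq \in P$. Thus $P$ is an ideal. It is nonzero, since each basis vector $f_n(z) = z^n \phi(a_n z)$ is a polynomial belonging to $H(K)$, so $f_n \in P$. Because $\mathbb{C}[z]$ is a principal ideal domain, $P = (d)$ for a single monic polynomial $d$.

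Next I would pin down $d$ using divisibility. Since every $f_n \in P = (d)$, the generator $d$ divides $z^n \phi(a_n z)$ for all $n \geq 0$. Suppose toward a contradiction that $d$ is non-constant and let $r \in \mathbb{C}$ be a root. From $d \mid f_0 = \phi(a_0 z)$ and $\phi(0) = \beta_0 = 1 \neq 0$ we get $r \neq 0$; hence $r \neq 0$ forces $\phi(a_n r) = 0$ for every $n$. The roots of $\phi(z) = \prod_{j}(1 - w_j z)$ are exactly $z_1, \ldots, z_J$ (as $w_j = \overline{z_j}$ and $|z_j| = 1$), so for each $n$ there is an index $j(n)$ with $a_n r = z_{j(n)}$. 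Letting $n \to \infty$ and using $a_n \to 1$ gives $z_{j(n)} = a_n r \to r$; since $\{z_1, \ldots, z_J\}$ is finite, the sequence $z_{j(n)}$ is eventually equal to its limit $r$, whence $a_n r = r$ and $a_n = 1$ for all large $n$. This contradicts the standing hypothesis that $1 - a_n$ never vanishes. Therefore $d$ has no roots, $d$ is constant, $P = \mathbb{C}[z]$, and $H(K)$ contains every polynomial.

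I expect the only genuinely substantive step to be the recognition that $P$ is an ideal rather than a mere subspace: this is exactly where Theorem \ref{zMultiplies} is used, and it is what upgrades ``$H(K)$ contains the particular polynomials $f_n$ and $\phi H^2$'' to ``$H(K)$ contains \emph{all} polynomials.'' The remaining root-counting is routine and relies only on $a_n \to 1$ together with $a_n \neq 1$; in fact it is cleaner than an alternative route that would first try to show $1 \in H(K)$ directly (which is not obviously easier, since $1 \notin \phi H^2(\mathbb{D})$ because $\phi$ has simple zeros on $\mathbb{T}$, so that $1/\phi \notin H^2$) and then generate $z^n = M_z^n 1$.
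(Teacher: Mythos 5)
Your proof is correct, but it takes a genuinely different route from the paper's. The paper likewise starts from Theorem \ref{zMultiplies} and reduces the problem to showing $1\in H(K)$; it then expands $1=\sum_{n}c_nf_n(z)$, equates powers of $z$ to get the recursion $c_j=-\sum_{i=1}^{j}c_{j-i}\beta_i a_{j-i}^{i}$, notes that the vectors $\left(c_{n-J+1},\dots,c_n\right)^T$ propagate by the same recursion matrices $M_{n+1}$ analyzed earlier, and invokes the norm estimates of Lemma \ref{big lemma} and Theorem \ref{big theorem} once more to conclude $(c_n)\in\ell^2$. You instead observe that the set $P$ of polynomials in $H(K)$ is a nonzero ideal of the principal ideal domain $\mathbb{C}[z]$ (nonzero because every basis vector $f_n$ lies in it, an ideal because Theorem \ref{zMultiplies} makes $p(M_z)$ a multiplier for every polynomial $p$), and you rule out a nonconstant generator by a root-chasing argument: a common root $r\neq 0$ of all $f_n=z^n\phi(a_nz)$ would force $a_nr\in\{z_1,\dots,z_J\}$ for all $n$, hence $a_n=1$ eventually, contradicting the standing hypothesis that $1-a_n$ is nonvanishing. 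Your argument is purely algebraic once Theorem \ref{zMultiplies} is granted, makes no further use of the recursion machinery, and needs nothing about the rate of convergence of $a_n$ beyond what is already embedded in that theorem; the paper's route is more quantitative, exhibiting the actual coefficient sequence of $1$ in the orthonormal basis and keeping the whole section inside the single matrix-recursion framework. Both are valid; yours is arguably the cleaner deduction of the corollary from the theorem.
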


\begin{proof}
In light of Theorem \ref{zMultiplies}, it suffices to show that $1\in H(K)$.
Write 
$$1=\sum_{n=0}^\infty \, c_nf_n(z) =\sum_{n=0}^\infty \,\left( \sum_{j=0}^J \, c_n\beta_j a_n^j  z^{j+n}\right).$$
It is enough to show $(c_n) \in \ell^2$. Equating like powers of $z$ leads to the recursion with starting value $c_0=1$  and thereafter:
$$c_j = -  \sum_{i=1}^ j c_{j-i} \beta_i a_{j-i}^i \quad \text{if} \quad j \geq 1$$
where we recall that $\beta_i =0$ if $i > J$.  Once again, the vectors 
$\vec{v}_{n}=\left(c_{n-J+1}, c_{n-J+2}, \ldots , c_{n} \right)^T$  satisfy the recursion
$\vec{v}_{n+1} =M_{n+1}\vec{v}_n$ for $n=J,J+1,\ldots$ and the result follows as before.
\end{proof}

Much future work could be done in this area. For instance, one could try to obtain a full characterization of the multiplier algebras of these finite bandwidth spaces.

\section*{Acknowledgments} 
We would like to acknowledge Paul McGuire for his careful reading of the manuscript and numerous suggestions. We would also like to acknowledge Cody Stockdale, whose honors thesis at Bucknell University in 2015 paved the way for the key matricial methods used in this paper. This paper grew out of the second author's Honors Thesis at Bucknell University in 2017. The second author is currently supported by a NSF GRF (grant number DGE-1745038).

\end{document}